\def\thefigure{\thesection.\@arabic\c@figure}
\def\fps@figure{h,t}
\def\thetable{\thesection.\@arabic\c@table}
\def\fps@table{h, t}
\theoremstyle{plain}
\newtheorem{theorem}{Theorem}
\newtheorem*{theorem*}{Theorem}
\newtheorem{corollary}[theorem]{Corollary}
\newtheorem{example}[theorem]{Example}
\newtheorem{lemma}[theorem]{Lemma}
\newtheorem{proposition}[theorem]{Proposition}
\newtheorem{remark}[theorem]{Remark}
\numberwithin{theorem}{section}
\numberwithin{equation}{section}
\renewcommand{\1}{{\bf 1}}
\newcommand{\Ad}{{\rm Ad}}
\newcommand{\ad}{{\rm ad}}
\newcommand{\Ci}{{\mathcal C}^\infty}
\newcommand{\Aut}{{\rm Aut}}
\newcommand{\Bun}{\text{{\boldmath{$\mathfrak{B}$}}}}
\newcommand{\Der}{{\rm Der}}
\newcommand{\de}{{\rm d}}
\newcommand{\Galg}{G^{\rm alg} }
\newcommand{\ggalg}{\gg^{\rm alg}}
\newcommand{\I}{{\rm I}}
\newcommand{\II}{{\rm II}}
\newcommand{\III}{{\rm III}}
\newcommand{\Ind}{{\rm Ind}}
\newcommand{\ind}{{\rm ind}}
\newcommand{\nor}{{\rm nor}}
\renewcommand{\O}{{\mathbf{O}}}
\newcommand{\Prim}{{\rm Prim}}
\newcommand{\RelS}{\text{{\boldmath{$\mathcal{S}$}}}}
\newcommand{\spa}{{\rm span}\,}
\newcommand{\Tr}{{\rm Tr}\,}
\newcommand{\CC}{{\mathbb C}}
\newcommand{\KK}{{\mathbb K}}
\newcommand{\RR}{{\mathbb R}}
\newcommand{\Bc}{{\mathcal B}}
\newcommand{\Oc}{{\mathcal O}}
\newcommand{\Vc}{{\mathcal V}}
\newcommand{\Gg}{{\mathfrak G}}
\newcommand{\ag}{{\mathfrak a}}
\newcommand{\dg}{{\mathfrak d}}
\renewcommand{\gg}{{\mathfrak g}}
\newcommand{\hg}{{\mathfrak h}}
\renewcommand{\ng}{{\mathfrak n}}
\newcommand{\pg}{{\mathfrak p}}
\newcommand{\qg}{{\mathfrak q}}
\newcommand{\matt}[2]
{\ensuremath{\begin{pmatrix}
			{#1} & 0 \\
			0 & {#2}
\end{pmatrix}}}
\newcommand{\mattt}[3]
{\ensuremath{\begin{pmatrix}
			{#1} & & 0\\
			& {#2} & \\
			0& & {#3}
\end{pmatrix}}}
\newcommand{\mathsout}[1]
{\bgroup\mathchoice
	{\sbox0{$\displaystyle{#1}$}%
		\usebox0\hspace{-\wd0}%
		\rule[0.5\ht0-0.5\dp0-.5pt]{\wd0}{1pt}}%
	{\sbox0{$\textstyle{#1}$}%
		\usebox0\hspace{-\wd0}%
		\rule[0.5\ht0-0.5\dp0-.5pt]{\wd0}{1pt}}%
	{\sbox0{$\scriptstyle{#1}$}%
		\usebox0\hspace{-\wd0}%
		\rule[0.5\ht0-0.5\dp0-.5pt]{\wd0}{1pt}}%
	{\sbox0{$\scriptscriptstyle{#1}$}%
		\usebox0\hspace{-\wd0}%
		\rule[0.5\ht0-0.5\dp0-.5pt]{\wd0}{1pt}}%
	\egroup}
\title[Regular representation and open quasi-orbits]
{On the regular representation of solvable Lie groups with open coadjoint quasi-orbits}
\author{Ingrid Belti\c t\u a}
\author{Daniel Belti\c t\u a}
\address{Institute of Mathematics ``Simion Stoilow'' of the Romanian Academy,	P.O. Box 1-764, Bucharest, Romania}
\email{Ingrid.Beltita@imar.ro, ingrid.beltita@gmail.com}
\email{Daniel.Beltita@imar.ro, beltita@gmail.com}
\keywords{solvable Lie group; Casimir function; quasi-orbit; factor representation}
\subjclass[2020]{Primary 22E27; Secondary 22D25, 22E25, 17B30}
\thanks{The research of the first-named author is based upon work from COST Action
	21109 CaLISTA, supported by COST (European Cooperation in
	Science and Technology), www.cost.eu.
	The research of the second-named author was supported by a grant of the  Ministry of Research, Innovation and Digitization, CNCS/CCCDI -- UEFISCDI, project number PN-III-P4-ID-PCE-2020-0878, within PNCDI III}
\begin{document}

	\begin{abstract}
		We obtain a Lie theoretic intrinsic characterization of the connected and simply connected solvable Lie groups whose regular representation is a  factor representation. 
		When this is the case,  
		the corresponding von Neumann algebras 
		are isomorphic to the hyperfinite $\II_\infty$ factor, and every Casimir function is constant. 
		We thus obtain a family of geometric models for the standard representation of that factor. 
	Finally, we show that the regular representation of any connected and simply connected solvable Lie group with open coadjoint orbits is always of type~$\I$, though the group needs not be of type~$\I$, and include some relevant examples.
	\end{abstract}

	\maketitle
	
	
	\section{Introduction}
In this paper, we obtain an intrinsic characterization of the connected and simply connected solvable Lie groups with the property that the group von Neumann algebra is a factor. 
Our construction leads to solvable Lie groups for which the regular representation is obtained by geometric quantization starting from a suitable Poisson manifold, specifically a symplectic foliation which is a generalized coadjoint orbit,  in the language of \cite{Pu86}.
	
	There is no available characterization so far of the locally compact groups whose regular representation is a factor representation, despite the interest in this problem, as shown for instance by the various disparate results collected in \cite[Sect.~7.E]{BkH20}. 
It is known that 
	for any countable 
	discrete group, its regular representation is a factor representation if and only if the group under consideration has infinite conjugacy classes. 
	That simple characterization is  however specific to discrete groups. 
	For many connected Lie groups (including the compact, the nilpotent, and the reductive ones) the regular representation is never a factor representation. 
Nevertheless, we do find group von Neumann algebras that are factors in the class of (connected and simply connected)  solvable Lie groups.

However, unlike the case of discrete groups, we find that in the case of solvable Lie groups, if the regular representation is a factor representation then the corresponding factor is always isomorphic to the hyperfinite type~\II$_\infty$ factor.
	We thus add to the previous geometric realizations of   the hyperfinite $\II_\infty$ factor (see, e.g., \cite{Dy95}), by providing geometric models associated to families of simply connected solvable Lie groups. 
	
	Our results have also  strong implications on the Lie-Poisson space 
	$\gg^*$, where $\gg$ is the Lie algebra of a solvable Lie group $G$. 
We recall that
a (generalized) Casimir function $c\in\Ci(\gg^*)$ by definition satisfies $\{c,\varphi\}=0$ for all $\varphi\in\Ci(\gg^*)$, where $\{\cdot, \cdot\}$ 
is the Lie-Poisson bracket;
equivalently, the smooth function $c\colon\gg^*\to\RR$ is 
constant on the coadjoint orbits 
of the Lie group $G$. 
While the space of polynomial Casimir functions for semisimple Lie algebras and even for nilpotent Lie algebras has been pretty well understood since a long time ago, much less information is available for Casimir functions on general solvable Lie algebras. 
Yet, there is a recent surge of interest in this area, in connection with some problems in mathematical physics and integrable systems, 
since the polynomial Casimir functions on $\gg^*$ parameterize the 2-sided differential operators on the Lie group~$G$, whose spectra characterize specific properties of physical systems that admit $G$ as a symmetry group. 
See for instance \cite{DS24} and the references therein.

A striking difference from the case of semisimple Lie algebras and nilpotent Lie algebras is the existence of solvable Lie algebras for which every Casimir function is constant. 
This is the case for instance for the so-called Frobenius Lie algebras, that is, 
solvable Lie algebras that admit open coadjoint orbits, 
but it turns out that these are by no means the only examples. 
In the present paper we approach this phenomenon from the perspective of representations theory. 
In particular, our systematic investigation of the factoriality property of the regular representation reveals a class of groups that do not have open coadjoint orbits and yet they satisfy the condition on absence of nonconstant Casimir functions.

	If $G$ is a solvable Lie group whose regular representation is a factor representation, then the group $C^*$-algebra $C^*(G)$ is necessarily primitive, that is, there exists a faithful irreducible $*$-representation of $C^*(G)$.
Thus, the present approach leads in particular to the first examples of non-type-$\I$ solvable Lie groups with primitive $C^*$-algebras. 
A systematic study of solvable Lie groups with this last property was initiated in \cite{BB18} using groupoid methods, however the specific examples illustrating the general theory were constructed within the theory of algebraic linear groups, 
which are well known to be necessarily type~$\I$. 
(See e.g., \cite{BeEc21} for a recent general result in this connection.) 
We recall that the method in \cite{BB18} of constructing faithful irreducible $*$-representations of group $C^*$-algebras relied on linear group actions
that have an open dense orbit.

	The structure of this paper is as follows: 
	In Section~\ref{Sect2} we collect a few remarks on the Puk\'ansky correspondence, for later use. 
	In Section~\ref{Sect3} we obtain the characterization of solvable Lie groups whose regular representation is a factor representation (Theorem~\ref{factor3}), and show that in that case, the group von Neumann algebra is isomorphic to the hyperfinite \II$_\infty$ factor.
(Corollary~\ref{typeI-fact}). 
As a consequence we obtain that these groups 
have no non-constant Casimir functions. 
	Finally, in Section~\ref{Sect4}, we study the regular representation of Frobenius Lie groups.
	We find that the corresponding  group von Neumann algebras 
are always type \I\ (Corollary~\ref{regI-cor}), but we prove by example that a 
Frobenius Lie group may not be type~$\I$ (Example~\ref{exF}).

\subsection*{General notation}
The 1-connected (that is, connected and simply connected)  Lie groups are denoted by upper case Roman letters and their Lie algebras by the corresponding
lower case Gothic letters. 

For a locally compact group $H$, let $\lambda_H\colon H\to \Bc(L^2(H))$  be its left regular representation. 
It yields a $*$-representation $\lambda_H\colon L^1(H)\to \Bc(L^2(H))$, where $L^1(H)$ is regarded as a Banach $*$-algebra with respect to the usual convolution. 
See for instance \cite{Di64} and \cite{BkH20} 
for additional background information on the group $C^*$-algebra $C^*(H)$ and its primitive ideal space $\Prim(H)$. 
Also, we refer to \cite{BC20} for representation theory of solvable Lie groups.

\section{Preliminaries}
\label{Sect2}

Throughout this paper, $G$ denotes a 1-connected solvable Lie group with its Lie algebra~$\gg$. We use the notation introduced in \cite[Sect. 2]{BB24}. 
We collect below a few basic ingredients of the Puk\'anszky correspondence.

The starting point is the partition of the dual of the Lie algebra into coadjoint quasi-orbits via the equivalence relation on $\gg^*$ defined by  $\xi\sim\eta$ if and only if $\overline{G\xi}=\overline{G\eta}$, where $\overline{G\xi}$ is the closure of the coadjoint orbit of~$\xi\in\gg^*$. 
We thus obtain a partition into immersed smooth manifolds 
$$\gg^*=\bigsqcup_{\Oc\in(\gg^*/G)^\sim}\Oc$$
and one then constructs a certain space $\Bun(\gg^*)$ equipped with a surjective, $G$-equivariant, intrinsically defined map $\tau\colon\Bun(\gg^*)\to\gg^*$ with the property that the restriction $\tau\vert_{\Bun(\Oc)}\colon\Bun(\Oc)\to\Oc$  is a trivial, smooth torus bundle 
for every $\Oc\in(\gg^*/G)^\sim$, where $\Bun(\Oc):=\tau^{-1}(\Oc)$. 
The total space $\Bun(\Oc)$ is partitioned into $G$-orbit closures called \emph{generalized coadjoint orbits} and their set is denoted by $(\Bun(\Oc)/G)^\approx$. 
The set of all generalized coadjoint orbits, when $\Oc$ runs over  $(\gg^*/G)^\sim$,  is denoted by~$\RelS$.

\subsection*{Elements of Puk\'anszky theory}
A key piece in the 
Puk\'anszky theory is the equivariance property of the Kirillov correspondence between coadjoint orbits and equivalence classes of unitary irreducible representations of any nilpotent Lie group (cf., e.g., \cite[page~603]{Pu71} or \cite[III, 1c, page 82]{Pu73}). 
We record this fact in full generality as it will be needed in the proofs of Lemmas~\ref{factor1}--\ref{factor2} below. 

\begin{lemma}\label{equiv}
	If $N$ is a 1-connected nilpotent Lie group with its Lie algebra $\gg$, 
	then both the quotient mapping $\ng^*\to\ng^*/N$, $\xi\mapsto N\xi$, and the Kirillov homeomorphism $\kappa\colon\ng^*/N\to\widehat{N}$ are equivariant with respect to the natural actions of the automorphism group $\Aut(N)$. 
\end{lemma}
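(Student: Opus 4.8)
The plan is to handle the two equivariance claims in turn, deriving the second from the first together with the standard construction of the Kirillov correspondence. Since $N$ is $1$-connected, the differential $\varphi\mapsto\dot\varphi:=\Lie(\varphi)$ identifies $\Aut(N)$ with $\Aut(\ng)$, and I take the natural action of $\Aut(N)$ on $\ng^*$ to be the contragredient one, $\varphi\cdot\xi:=\xi\circ\dot\varphi^{-1}$, and on $\widehat N$ to be $\varphi\cdot[\pi]:=[\pi\circ\varphi^{-1}]$; note that the former restricts, on the inner automorphisms $\Int(N)\subseteq\Aut(N)$, to the coadjoint action.

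For the quotient mapping the key is the relation $\varphi\circ\Int_N(n)=\Int_N(\varphi(n))\circ\varphi$ in $\Aut(N)$, valid for every $n\in N$. Differentiating gives $\dot\varphi\circ\Ad_N(n)=\Ad_N(\varphi(n))\circ\dot\varphi$ in $\Aut(\ng)$, and passing to the contragredient actions on $\ng^*$ yields $\varphi\cdot\bigl(\Ad_N^*(n)\xi\bigr)=\Ad_N^*(\varphi(n))\bigl(\varphi\cdot\xi\bigr)$. Letting $n$ range over $N$, so that $\varphi(n)$ ranges over $N$ as well, this shows $\varphi\cdot(N\xi)=N(\varphi\cdot\xi)$; hence $\Aut(N)$ permutes coadjoint orbits, the induced action on $\ng^*/N$ is well defined, and the quotient map is equivariant. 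As a consistency check, inner automorphisms then act trivially on $\ng^*/N$, as they must.

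For the Kirillov homeomorphism I would use the realization $\pi_\xi=\Ind_M^N\chi_\xi$, where $\mg\subseteq\ng$ is a polarization subordinate to $\xi$, $M=\exp\mg$, and $\chi_\xi(\exp X)=\ee^{\ie\langle\xi,X\rangle}$ for $X\in\mg$. Because $\dot\varphi$ is a Lie algebra automorphism it carries the bilinear form $\xi\circ[\cdot,\cdot]$ to $(\varphi\cdot\xi)\circ[\cdot,\cdot]$, and hence polarizations to polarizations, so $\dot\varphi(\mg)$ is subordinate to $\varphi\cdot\xi$, with $\exp\bigl(\dot\varphi(\mg)\bigr)=\varphi(M)$ and $\chi_{\varphi\cdot\xi}=\chi_\xi\circ\varphi^{-1}$ on $\varphi(M)$. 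The naturality of induction under the automorphism $\varphi$ then gives
\begin{equation*}
\pi_{\varphi\cdot\xi}=\Ind_{\varphi(M)}^{N}\bigl(\chi_\xi\circ\varphi^{-1}\bigr)\cong\bigl(\Ind_{M}^{N}\chi_\xi\bigr)\circ\varphi^{-1}=\pi_\xi\circ\varphi^{-1},
\end{equation*}
and since the Kirillov class is independent of the chosen polarization this reads $\kappa\bigl(N(\varphi\cdot\xi)\bigr)=\varphi\cdot\kappa(N\xi)$, which is the asserted equivariance.

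The one point requiring genuine verification rather than bookkeeping is the naturality isomorphism $\Ind_{\varphi(M)}^{N}(\sigma\circ\varphi^{-1})\cong(\Ind_M^N\sigma)\circ\varphi^{-1}$ underlying the display. The natural candidate is the pullback intertwiner $U\colon f\mapsto f\circ\varphi^{-1}$ on $L^2$-sections; a direct computation shows that it intertwines the two actions, and the only substantive issue is that $U$ be a scalar multiple of an isometry. This comes down to the transformation of the invariant measure on the relevant homogeneous space under $\varphi$, which is rescaled by a fixed positive constant, with no modular character intervening since $N$, being nilpotent, is unimodular. Thus $U$ is a nonzero bounded intertwiner between irreducible representations, and unitary equivalence follows from Schur's lemma, completing the argument.
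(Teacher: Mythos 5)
Your argument is correct and follows essentially the same route as the paper's proof, which simply notes that the quotient-map equivariance is straightforward and that the equivariance of $\kappa$ follows from its definition via induced representations together with the naturality of induction (citing Corwin--Greenleaf); you have merely written out the details, including the transport of polarizations and the measure/Schur argument for the intertwiner.
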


\begin{proof}
	The equivariance property of the quotient mapping $\ng^*\to\ng^*/N$ is straightforward, while the equivariance of the Kirillov correspondence $\kappa$ is based on its definition in terms of induced representations along with the equivariance property of the construction of induced representations, 
	cf., e.g., \cite[Lemma 2.1.3]{CG90}. 
\end{proof}

\begin{remark}\label{ell_constr}
	\normalfont
	For later use, we recall the construction of the Puk\'anszky correspondence $\ell\colon\RelS\to\stackrel{\frown}{G}_\nor$. 
	For arbitrary $\O\in\RelS$ there exists a unique coadjoint 
	quasi-orbit $\Oc\in(\gg^*/G)^\sim$ with 
	$\O\in(\Bun(\Oc)/G)^\approx$. 
	There exists a connected (not necessarily simply connected) solvable Lie group $\Gg$ that acts transitively on~$\O$ for which $G\subseteq\Gg$ as a closed subgroup with $[\Gg,\Gg]=[G,G]=D$, by \cite[Ch. II, Prop. 7.1, page 539]{Pu71}. 
	Also, there exists a positive $\Gg$-invariant (hence $G$-invariant) Borel measure~$\mu_\O$ on~$\O$, which is unique up to a positive scalar multiple, by \cite[Ch. III, Prop. 1.1, page 545]{Pu71} and its proof. 
	Let $\underline{\hg}:=(\hg_p)_{p\in\O}$ be a $\Gg$-invariant field of polarizations 
	(cf. \cite[Ch. III, \S 2.2, page 548]{Pu71}),
	and for every $p\in\O$  define the semifinite factor unitary representation $T(p):=\ind(\hg_p,p)$ of $G$ as in \cite[Ch. I, Th. 1, page 512]{Pu71}. 
	Then $(T(p))_{p\in\O}$ is a measurable field of representations 
	and if we define 
	\begin{equation}\label{ell_constr_eq1}
		T_{\underline{\hg}}:=\mathop{\int^\oplus}\limits_{\O}T(p)\de\mu_\O(p)
	\end{equation} 
	then $T_{\underline{\hg}}$ is a semifinite factor representation of $G$ with $\ell(\O)=[T_{\underline{\hg}}]^\frown\in\stackrel{\frown}{G}_\nor$. 
	(See \cite[Ch. III, Th. 2, page 551]{Pu71} and \cite[\S 6]{Pu74}.)
\end{remark}

\begin{remark}\label{ampl}
	\normalfont
	We fix a 1-connected solvable Lie group $\Galg$ with its Lie algebra $\ggalg$ for which $G\subseteq \Galg$ is a closed subgroup, $[\gg,\gg]=[\ggalg,\ggalg]=\dg$, and $\ggalg$ is isomorphic to an algebraic Lie algebra. 
		Such a group always exists, cf. \cite[page 521]{Pu71}, where $\Galg$ is denoted by $\widetilde{G}$, or \cite[Subsect. 2.1]{BB24}.
	
The group  $\Galg$ acts on $D$ by 
	automorphisms, since $D=[\Galg,\Galg]$ is a closed normal subgroup of $\Galg$, and this action gives an action of $\Galg$ on $\widehat{D}$.  
As the action of the additive group $\dg^\perp$  on $\gg^*$ by translations commutes with the action of $\Galg$, there  is also a natural group action $(\Galg\times\dg^\perp)\times\gg^*\to\gg^*$, 
$((a,\sigma),\xi)\mapsto a\xi+\sigma$. (See \cite[Rem.~2.3]{BB24}.)

Let $\Omega\in\gg^*/(\Galg+\dg^\perp)$ and $\widetilde{\Omega}:=\iota^*(\Omega)\subseteq\dg^*$. 
	Fix an arbitrary $\xi\in\Omega$. 
	We then have $\Omega=(\Galg+\dg^\perp)\xi$ and $\widetilde{\Omega}=\Galg\xi_0$, where $\xi_0:=\xi\vert_\dg=\iota^*(\xi)$. 
	Moreover, $\Omega$ carries a $(\Galg+\dg^\perp)$-invariant Borel measure $\de\nu$, by \cite[Ch. III, \S 3.1.a, page 552]{Pu71}.
	One can then construct a measure $\de\rho$ on $\Bun(\Omega)$ which is equal to $\de\mu\times\de\varphi$ in every trivialization of the trivial principal bundle $\tau\vert_{\Bun(\Omega)}\colon\Bun(\Omega)\to\Omega$, where $\de\varphi$ denotes the probability Haar measure of the fiber of that bundle, by \cite[Ch. III, \S 3.3.c, page 554]{Pu71}. 
	
	If we denote $K:=\overline{G}(\xi)D$, then $K$ is a closed normal subgroup of $G$ which does not depend on the choice of $\xi\in\Omega$. 
	There exists a Borel field of unitary representations $(U(p))_{p\in\Bun(\Omega)}$ of $K$ such that 
	the representation $\Ind_K^G (U(p))$ is unitary equivalent to the representation $T(p)=\ind(\hg_p,p)$ from Remark~\ref{ell_constr}
	for every  $\O\in(\Bun(\Omega)/G)^\approx$ and every $\Gg$-invariant field of polarizations  
	$\underline{\hg}:=(\hg_p)_{p\in\O}$. 
	See \cite[Ch. III, \S 3.3.d, page~554]{Pu71} and \cite[Ch. III, \S 2.2, page 549]{Pu71}. 
	Let us define 
	$$T:=\mathop{\int^\oplus}\limits_{\Bun(\Omega)}\Ind_K^G (U(p))\de\rho(p).$$ 
	Then there exists a Borel measure $\de\O$ on $(\Bun(\Omega)/G)^\approx$ such that one has a central direct integral decomposition 
	$$T=\mathop{\int^\oplus}\limits_{(\Bun(\Omega)/G)^\approx}T(\O) \de\O, $$
	where $T(\O)$ is unitary equivalent to the representation $T_{\underline{\hg}}$ in \eqref{ell_constr_eq1},  for every $\O\in(\Bun(\Omega)/G)^\approx$, by \cite[Ch. III, Lemma 3.4.4 page 561]{Pu71}.  
	
	We now use the Kirillov correspondence $\kappa\colon \dg^*/D\to\widehat{D}$ for the nilpotent Lie group $D$ to define $[\pi_0]:=\kappa(D\xi_0)\in\widehat{D}$. 
	We also select a Borel measurable field of unitary irreducible representations $(\pi(\zeta))_{\zeta\in\widehat{D}}$ with $[\pi(\zeta)]=\zeta$ for every $\zeta\in\widehat{D}$. 
	(See \cite[Lemma 8.6.2]{Di64}.) 
	Then the representation $T$ is unitary equivalent to 
	$$m\cdot\Ind_D^G\Bigl(\mathop{\int^\oplus}\limits_{\Galg[\pi_0]}\pi(\zeta)\de\nu(\zeta)\Bigr), $$
	where $\de\nu$ is a $\Galg$-invariant measure on $\Galg[\pi_0]$. 
	Here $m=\aleph_0$ if the group $D$ is noncommutative, and $m=1$ if the group $D$ is commutative. 
	See \cite[Ch. III, Lemma 3.4.5, page 563]{Pu71} and its proof.  
	In particular, we obtain the unitary equivalence of unitary representations of $G$
	\begin{equation}
	\label{ampl_eq1}
	\mathop{\int^\oplus}\limits_{(\Bun(\Omega)/G)^\approx}T(\O) \de\O
	\simeq
	m\cdot\Ind_D^G\Bigl(\mathop{\int^\oplus}\limits_{\Galg[\pi_0]}\pi(\zeta)\de\nu(\zeta)\Bigr)
	\end{equation}
which is needed is the proof of Lemma~\ref{factor1} below. 
\end{remark}

\section{Solvable Lie groups having factor regular representations}
\label{Sect3}

In this section we obtain our main result on the characterization of solvable Lie groups having factor regular representations (Theorem~\ref{factor3}). 
The von Neumann algebras generated by the regular representations of these groups provide geometric realizations for the standard representation of the hyperfinite $\II_\infty$ factor (Corollary~\ref{typeI-fact}). 
We also give 
necessary conditions on the Lie algebras of 1-connected solvable Lie groups to have 
factor regular representations, 
in terms of their universal enveloping algebras (Corollary~\ref{typeI-fact2}). 
In particular, we show that every Casimir function is necessarily constant.

We start with a general technical lemma. 

\begin{lemma}\label{hp}
	Let $N$ be separable, unimodular, type~$\I$, locally compact group with a fixed Haar measure $\de\lambda(x)$ and its corresponding Plancherel measure $\de\widehat{\lambda}([\pi])$ on~$\widehat{N}$. 
	Assume that  $\Gamma$ is a topological group with a group homomorphism  $\alpha\colon \Gamma\to\Aut(N)$, $\gamma\mapsto\alpha_\gamma$ such that the mapping $\Gamma\times N\to N$, $(\gamma,x)\mapsto\alpha_\gamma(x)$, is continuous.
	Then
	for every $\gamma\in\Gamma$ there exists $\vert\alpha_\gamma\vert\in (0,\infty)$ satisfying $(\alpha_\gamma)_*(\de \lambda)=\vert\alpha_\gamma\vert^{-1}\de \lambda$ and $(\alpha_\gamma)_*(\de\widehat{\lambda})=\vert\alpha_\gamma\vert^{-1} \de\widehat{\lambda}$.
	Moreover,  the mapping $\Gamma\to(0,\infty)$, $\gamma\mapsto\vert\alpha_\gamma\vert$ is a continuous group homomorphism. 
\end{lemma}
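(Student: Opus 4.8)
The plan is to handle the two measures in turn: the scalar $\vert\alpha_\gamma\vert$ is read off from the Haar measure, and then the Plancherel measure is shown to scale by the very same factor, the point being that the Plancherel measure is canonically determined by the Haar measure through the Plancherel theorem.

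\emph{The Haar module.} For fixed $\gamma$ the pushforward $(\alpha_\gamma)_*(\de\lambda)$ is again a left Haar measure on $N$, since the topological automorphism $\alpha_\gamma$ carries left translations to left translations; by uniqueness of Haar measure up to a positive factor there is a unique $\vert\alpha_\gamma\vert\in(0,\infty)$ with $(\alpha_\gamma)_*(\de\lambda)=\vert\alpha_\gamma\vert^{-1}\de\lambda$. Equivalently $\int_N(g\circ\alpha_\gamma^{-1})\,\de\lambda=\vert\alpha_\gamma\vert\int_N g\,\de\lambda$ for all integrable $g$; this is the module of $\alpha_\gamma$, and its existence gives the first assertion for $\de\lambda$.

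\emph{The Plancherel measure (main step).} Here I would invoke the abstract Plancherel theorem for the separable, unimodular, type~$\I$ group $N$ (as in \cite{Di64}): the operator-valued Fourier transform $f\mapsto(\pi(f))_{[\pi]}$, with $\pi(f)=\int_N f(x)\pi(x)\,\de\lambda(x)$, is a unitary of $L^2(N,\de\lambda)$ onto $\int^\oplus_{\widehat N}\mathrm{HS}(\Hc_\pi)\,\de\widehat\lambda([\pi])$, and $\de\widehat\lambda$ is the unique measure with this property for the given $\de\lambda$. The automorphism $\alpha_\gamma$, corrected by the module, defines a unitary $U_\gamma$ on $L^2(N,\de\lambda)$ by $U_\gamma f=\vert\alpha_\gamma\vert^{-1/2}\,f\circ\alpha_\gamma^{-1}$. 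A change of variables using the Haar module gives the key identity $\pi(f\circ\alpha_\gamma^{-1})=\vert\alpha_\gamma\vert\,(\pi\circ\alpha_\gamma)(f)$, whence $\pi(U_\gamma f)=\vert\alpha_\gamma\vert^{1/2}\,(\pi\circ\alpha_\gamma)(f)$. Thus, under the Fourier transform, $U_\gamma$ becomes the reindexing of fields by the Borel map $[\pi]\mapsto[\pi\circ\alpha_\gamma]$ on $\widehat N$, composed with multiplication by the constant $\vert\alpha_\gamma\vert^{1/2}$ (and fibrewise unitaries coming from a measurable choice of representatives). Since $U_\gamma$ is unitary, writing out the norm equality $\Vert U_\gamma F\Vert^2=\Vert F\Vert^2$ for every field $F$ in the direct integral forces the reindexed measure to be $\vert\alpha_\gamma\vert^{-1}\de\widehat\lambda$; in the notation of the lemma this is exactly $(\alpha_\gamma)_*(\de\widehat\lambda)=\vert\alpha_\gamma\vert^{-1}\de\widehat\lambda$, with the same scalar as on the Haar side.

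\emph{Homomorphism and continuity.} Multiplicativity is immediate from functoriality of the pushforward together with $\alpha_{\gamma_1\gamma_2}=\alpha_{\gamma_1}\circ\alpha_{\gamma_2}$: applying $(\alpha_{\gamma_1})_*(\alpha_{\gamma_2})_*$ to $\de\lambda$ and comparing scalars yields $\vert\alpha_{\gamma_1\gamma_2}\vert=\vert\alpha_{\gamma_1}\vert\,\vert\alpha_{\gamma_2}\vert$. For continuity I would fix a nonnegative $f\in C_c(N)$ with $\int_N f\,\de\lambda\neq0$ and use $\vert\alpha_\gamma\vert=\bigl(\int_N(f\circ\alpha_\gamma^{-1})\,\de\lambda\bigr)\big/\bigl(\int_N f\,\de\lambda\bigr)$. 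As $\gamma\to\gamma_0$, continuity of inversion in $\Gamma$ and joint continuity of $(\gamma,x)\mapsto\alpha_\gamma(x)$ give $f(\alpha_\gamma^{-1}(x))\to f(\alpha_{\gamma_0}^{-1}(x))$ pointwise, while for $\gamma$ in a fixed compact neighbourhood of $\gamma_0$ all the functions $f\circ\alpha_\gamma^{-1}$ are supported in the single compact set obtained as the continuous image of that neighbourhood times $\supp f$; dominated convergence then makes $\gamma\mapsto\int_N(f\circ\alpha_\gamma^{-1})\,\de\lambda$, hence $\gamma\mapsto\vert\alpha_\gamma\vert$, continuous.

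\emph{Main obstacle.} The crux is the Plancherel step, where the modular factor for $\de\widehat\lambda$ must be pinned down and shown to coincide with that of $\de\lambda$. Everything rests on the uniqueness clause of the Plancherel theorem for type~$\I$ unimodular groups, on the transformation rule $\pi(f\circ\alpha_\gamma^{-1})=\vert\alpha_\gamma\vert\,(\pi\circ\alpha_\gamma)(f)$, and on matching the reindexing $[\pi]\mapsto[\pi\circ\alpha_\gamma]$ with the chosen action of $\Aut(N)$ on $\widehat N$; by contrast the module of the Haar measure, the homomorphism property, and the continuity are routine.
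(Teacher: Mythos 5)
Your proposal is correct and follows essentially the same route as the paper: the module $\vert\alpha_\gamma\vert$ comes from the uniqueness of Haar measure, and the scaling of $\de\widehat{\lambda}$ is forced by the transformation rule $\pi(f\circ\alpha_\gamma^{-1})=\vert\alpha_\gamma\vert\,(\pi\circ\alpha_\gamma)(f)$ together with the uniqueness clause of the Plancherel theorem (the paper phrases this through the trace identity $\int_N\vert\varphi\vert^2\de\lambda=\int_{\widehat{N}}\Tr(\pi(\varphi)\pi(\varphi)^*)\de\widehat{\lambda}$ rather than the full Plancherel unitary, and cites Bourbaki for the existence and continuity of the module). The one caveat is in your continuity argument: a compact neighbourhood of $\gamma_0$ need not exist since $\Gamma$ is only assumed to be a topological group, so you should instead take an arbitrary neighbourhood of $\gamma_0$ and use local compactness of $N$ (covering $\supp f$ by finitely many sets on which the action map lands in compacta) to trap the supports $\alpha_\gamma(\supp f)$ in a single compact set before applying dominated convergence.
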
 

\begin{proof}
	The existence of the continuous group homomorphism $\Gamma\to(0,\infty)$, $\gamma\mapsto\vert\alpha_\gamma\vert$ satisfying $(\alpha_\gamma)_*(\de \lambda)=\vert\alpha_\gamma\vert^{-1}\de \lambda$ for all $\gamma\in\Gamma$ follows by \cite[Ch. VII, \S 1, no. 4, Prop. 4]{Bo07}. 
	Hence for every $\gamma\in\Gamma$ we have 
	$$\int\limits_G\varphi\,\de (\alpha_\gamma)_*(\lambda)
	:=\int\limits_G\varphi\circ\alpha_\gamma\de \lambda
	=\vert\alpha_\gamma\vert^{-1} \int\limits_G\varphi\, \de \lambda$$ 
	for every $\varphi\in L^1(N)$. 
	On the other hand, the Plancherel measure $\de\widehat{\lambda}$ is the u\-nique measure on $\widehat{N}$ 
	satisfying 
	$$\int\limits_N\vert\varphi(x)\vert^2\de \lambda(x)
	=\int\limits_{\widehat{N}}\Tr(\pi (\varphi)\pi (\varphi)^*)\de\widehat{\lambda}[\pi]$$ 
	for all $\varphi\in L^1(N)\cap L^2(N)$,
	with $\pi(\varphi)=\int\limits_N\varphi(x)\pi (x)\de\lambda(x)$. 
	(See \cite[Th.~18.8.2]{Di64}.)
	We then have 
	\allowdisplaybreaks
	\begin{align*}
		\pi (\varphi\circ\alpha_\gamma)
		& =\int\limits_N\varphi(x)(\pi\circ\alpha_\gamma^{-1})(x)\de(\alpha_\gamma)_*(\lambda)(x) 
		=\vert \alpha_\gamma\vert^{-1} \int\limits_N\varphi(x)(\pi\circ\alpha_\gamma^{-1})(x)\de\lambda(x) \\
		&=\vert \alpha_\gamma \vert^{-1} (\pi\circ\alpha_\gamma^{-1})(\varphi)
	\end{align*}
	hence 
	\allowdisplaybreaks
	\begin{align*}
		\vert\alpha_\gamma\vert^{-1} \int\limits_N\vert\varphi(x)\vert^2\de \lambda(x)
		&= \int\limits_N\vert(\varphi\circ\alpha_\gamma)(x)\vert^2\de \lambda(x) \\
		&= \int\limits_{\widehat{N}}\Tr(\pi (\varphi\circ\alpha_\gamma)\pi (\varphi\circ\alpha_\gamma)^*)\de\widehat{\lambda}[\pi] \\
		&=\vert\alpha_\gamma \vert^{-2 }
		\int\limits_{\widehat{N}}\Tr((\pi\circ\alpha_\gamma^{-1})(\varphi)(\pi\circ\alpha_\gamma^{-1})(\varphi)^*)\de\widehat{\lambda}[\pi] \\
		&=\vert\alpha_\gamma\vert^{-2} 
		\int\limits_{\widehat{N}}\Tr(\pi (\varphi)\pi (\varphi)^*)
		\de(\alpha_\gamma)_*^{-1}(\widehat{\lambda})[\pi].
	\end{align*}
	The uniqueness property  of the Plancherel measure then implies 
	$(\alpha_\gamma)_*^{-1}(\widehat{\lambda})=\vert\alpha_\gamma\vert \widehat{\lambda}$, 
	and this completes the proof.
\end{proof}

We resume our setting where $G$ is a 1-connected solvable Lie group, $D=[G, G]$, and $\Galg$ is a 1-connected solvable Lie group 
with its Lie algebra $\ggalg$ isomorphic with an algebraic Lie algebra, 
$G \subseteq \Galg$ as a closed subgroup, and $[\gg, \gg] =[\ggalg, 	\ggalg]$, 
as in Remark~\ref{ampl}.

\begin{lemma}\label{hp-2}
	Let $\de\lambda$ be a Haar measure on $D$ with its corresponding Plancherel measure $\de\widehat{\lambda}$ on 
	$\widehat{D}$. 
	Assume that  for $\pi_0\in \widehat{D}$, the $\Galg$-orbit
	$\Galg[\pi_0]$  is a non-empty open subset of $\widehat{D}$.
	Then any  non-zero $\Galg$-invariant measure $\de\nu$ on $\Galg[\pi_0]$ 
	is equivalent to the restriction of 
	$\de\widehat{\lambda}$ to $\Galg[\pi_0]$. 
\end{lemma}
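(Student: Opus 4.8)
The plan is to deduce the asserted equivalence from the uniqueness of the quasi-invariant measure class on a homogeneous space, after verifying that \emph{both} $\de\nu$ and the restriction $\de\widehat{\lambda}\vert_{\Galg[\pi_0]}$ are non-zero quasi-invariant measures for the transitive action of $\Galg$ on $\Galg[\pi_0]$. First I would record that $D=[G,G]$ is a 1-connected nilpotent Lie group, hence separable, unimodular and type~$\I$, and that the conjugation action of $\Galg$ on its closed normal subgroup $D$ yields a continuous homomorphism $\alpha\colon\Galg\to\Aut(D)$ satisfying the hypotheses of Lemma~\ref{hp} (with $N=D$ and $\Gamma=\Galg$). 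Thus there is a continuous homomorphism $\gamma\mapsto\vert\alpha_\gamma\vert$ with $(\alpha_\gamma)_*(\de\widehat{\lambda})=\vert\alpha_\gamma\vert^{-1}\de\widehat{\lambda}$ on all of $\widehat{D}$. Since $\Galg[\pi_0]$ is $\Galg$-invariant, the same identity holds for $\de\widehat{\lambda}\vert_{\Galg[\pi_0]}$, so this restriction is relatively $\Galg$-invariant, in particular quasi-invariant; likewise $\de\nu$, being $\Galg$-invariant, is quasi-invariant.

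The crucial point is to show that $\de\widehat{\lambda}\vert_{\Galg[\pi_0]}\neq 0$, that is, that the open orbit is not Plancherel-null. For this I would use the Kirillov homeomorphism $\dg^*/D\xrightarrow{\sim}\widehat{D}$ (Lemma~\ref{equiv}): the generic coadjoint orbits form a Zariski-open, hence dense, $D$-invariant subset of $\dg^*$, so their image is a dense open subset of $\dg^*/D\cong\widehat{D}$ on which $\de\widehat{\lambda}$ is equivalent to Lebesgue measure (the explicit nilpotent Plancherel formula). Consequently the topological support of $\de\widehat{\lambda}$ is all of $\widehat{D}$ (equivalently, $D$ is amenable and type~$\I$, whence $\widehat{D}=\widehat{D}_{r}=\supp\de\widehat{\lambda}$), and therefore every non-empty open subset of $\widehat{D}$---in particular $\Galg[\pi_0]$---has strictly positive Plancherel measure.

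Finally, since $\Galg[\pi_0]$ is open, hence locally closed, the orbit map identifies it, as standard Borel $\Galg$-spaces, with the homogeneous space $\Galg/H$, where $H:=\{\gamma\in\Galg:\gamma[\pi_0]=[\pi_0]\}$ (Effros' theorem, applicable since the second countable locally compact group $\Galg$ acts on the standard Borel space $\widehat{D}$). On such a homogeneous space any two non-zero $\sigma$-finite quasi-invariant measures belong to the same measure class, hence are mutually absolutely continuous. Applying this to the two quasi-invariant measures $\de\nu$ and $\de\widehat{\lambda}\vert_{\Galg[\pi_0]}$---both non-zero, by hypothesis and by the previous paragraph, respectively---gives the asserted equivalence.

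The main obstacle is precisely the positivity statement $\de\widehat{\lambda}(\Galg[\pi_0])>0$: the uniqueness-of-measure-class theorem only produces an equivalence between \emph{non-zero} quasi-invariant measures, so one must genuinely rule out that the open orbit is a Plancherel-null set, and this is where the full-support property of the nilpotent Plancherel measure is essential.
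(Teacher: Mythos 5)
Your proposal is correct and follows essentially the same route as the paper's proof: both arguments establish that $\de\nu$ and $\de\widehat{\lambda}\vert_{\Galg[\pi_0]}$ are quasi-invariant (the latter via Lemma~\ref{hp} applied to the conjugation action of $\Galg$ on $D$), use the full-support property of the Plancherel measure of the type~$\I$ group $D$ together with openness of the orbit to see that the restriction is non-zero, and conclude by the uniqueness up to equivalence of non-zero quasi-invariant measures on a homogeneous space. The only cosmetic difference is that the paper cites the full-support property directly from Dixmier \cite[18.8.4]{Di64} rather than rederiving it from the Kirillov picture, and invokes Bourbaki for the uniqueness step without spelling out the Borel identification of the open orbit with $\Galg/H$.
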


\begin{proof}
	Since $D=[\Galg,\Galg]$ is a closed normal subgroup of $\Galg$, the group $\Galg$ acts on $D$ by 
	automorphisms defined as restrictions of the inner automorphisms of $\Galg$. 
	Then, by Lemma~\ref{hp}, the restriction of $\de\widehat{\lambda}$ to $\Galg[\pi_0]$ is $\Galg$- quasi-invariant.  
	Since $\Galg[\pi_0]\subseteq\widehat{D}$ is a nonempty open subset and the support of the Plancherel measure  $\de\widehat{\lambda}$ is equal to $\widehat{D}$ by \cite[18.8.4]{Di64}, it follows that the Plancherel measure of $\Galg[\pi_0]$ is different from~$0$, hence the restriction of $\de\widehat{\lambda}$ to $\Galg[\pi_0]$ is 
	a nonzero 
	quasi-invariant measure. 
	The statement is then a consequence of the uniqueness, up to equivalence, of 
 quasi-invariant measures on a homogeneous space (\cite[Ch. VII, \S 2, no. 5, Th. 1]{Bo07}).
\end{proof}

\begin{lemma}\label{factor1}
	Let $G$ be a 1-connected solvable Lie group. 
	If there exists a coadjoint quasi-orbit $\Oc\in(\gg^*/G)^\sim$ such that $\Oc$ is an open dense subset of $\gg^*$ and $\Bun(\Oc)\in(\Bun(\Oc)/G)^\approx$, then the left regular representation $\lambda_G\colon G\to\Bc(L^2(G))$ is a factor representation. 
\end{lemma}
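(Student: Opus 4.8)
The plan is to show that, for the open dense quasi-orbit $\Oc$, the regular representation $\lambda_G$ is unitarily equivalent to the representation $T$ constructed in Remark~\ref{ampl}, and then to read off factoriality from the central decomposition \eqref{ampl_eq1} once the hypothesis $\Bun(\Oc)\in(\Bun(\Oc)/G)^\approx$ forces the indexing space $(\Bun(\Omega)/G)^\approx$ to reduce to a single point.

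First I would use induction in stages, $\lambda_G\simeq\Ind_D^G\lambda_D$, together with the Plancherel decomposition of the nilpotent group $D$,
\[
\lambda_D\simeq\mathop{\int^\oplus}\limits_{\widehat{D}}\pi(\zeta)\otimes\1_{\Hc_{\pi(\zeta)}}\,\de\widehat{\lambda}(\zeta),
\]
in which $\pi(\zeta)$ occurs with multiplicity $m_\zeta:=\dim\Hc_{\pi(\zeta)}$. Since induction commutes with direct integrals and with amplification, this yields
\[
\lambda_G\simeq\mathop{\int^\oplus}\limits_{\widehat{D}}m_\zeta\cdot\bigl(\Ind_D^G\pi(\zeta)\bigr)\,\de\widehat{\lambda}(\zeta).
\]
Next I would locate the open dense quasi-orbit inside $\widehat{D}$. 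For $\xi\in\Oc$ the density of $\Oc$ forces $\overline{G\xi}=\gg^*$, so the single $(\Galg+\dg^\perp)$-orbit $\Omega\supseteq G\xi$ is dense in $\gg^*$; as the $(\Galg+\dg^\perp)$-action is algebraic, a dense orbit is open and its complement is a proper subvariety, hence Lebesgue-null. Projecting along the open surjection $\iota^*\colon\gg^*\to\dg^*$ and transporting through the Kirillov homeomorphism $\kappa$, I obtain that $\Galg[\pi_0]$ is open and dense in $\widehat{D}$ with Plancherel-null complement, and that $m_\zeta$ is constant along this single orbit, equal to the value $m$ of Remark~\ref{ampl}.

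Granting this, I may discard the null complement and restrict the integral to $\Galg[\pi_0]$, and then apply Lemma~\ref{hp-2} to replace $\de\widehat{\lambda}\vert_{\Galg[\pi_0]}$ by the equivalent $\Galg$-invariant measure $\de\nu$; since equivalent measures yield unitarily equivalent direct integrals, this produces
\[
\lambda_G\simeq m\cdot\Ind_D^G\Bigl(\mathop{\int^\oplus}\limits_{\Galg[\pi_0]}\pi(\zeta)\,\de\nu(\zeta)\Bigr).
\]
The right-hand side is exactly the representation appearing in \eqref{ampl_eq1}, so $\lambda_G$ is unitarily equivalent to the central direct integral $\mathop{\int^\oplus}\limits_{(\Bun(\Omega)/G)^\approx}T(\O)\,\de\O$. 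The hypothesis $\Bun(\Oc)\in(\Bun(\Oc)/G)^\approx$ says precisely that $(\Bun(\Omega)/G)^\approx$ consists of the single point $\Bun(\Omega)$, so this central integral collapses to $T(\Bun(\Omega))\simeq T_{\underline{\hg}}$, which is a semifinite factor representation by Remark~\ref{ell_constr}. Therefore $\lambda_G$ is a factor representation.

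I expect the main obstacle to be the measure-theoretic step in the second paragraph: verifying that the complement of $\Galg[\pi_0]$ in $\widehat{D}$ is Plancherel-null. This is where the algebraic structure of $\ggalg$ is indispensable---it is what guarantees that the dense orbit $\Omega$ is open with subvariety complement---and one must check carefully that $\kappa$ carries the Lebesgue-null complement in $\dg^*/D$ to a Plancherel-null set in $\widehat{D}$, using that the Plancherel measure of $D$ corresponds to Lebesgue measure on a cross-section of the generic coadjoint orbits. The remaining points (constancy of $m_\zeta$ along the orbit, and the commutation of $\Ind_D^G$ with direct integrals and amplifications) are routine.
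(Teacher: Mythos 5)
Your argument is correct and follows the paper's proof essentially step for step: induction in stages plus the Plancherel decomposition of $D$, identification of $\Galg[\pi_0]$ as an open dense subset of $\widehat{D}$ with Plancherel-negligible complement, Lemma~\ref{hp-2} to pass to the invariant measure, and \eqref{ampl_eq1} together with the hypothesis $(\Bun(\Oc)/G)^\approx=\{\Bun(\Oc)\}$ to collapse the central decomposition to the semifinite factor representation $T_{\underline{\hg}}$. The only cosmetic differences are that the paper gets openness of $\Oc=(\Galg+\dg^\perp)\xi$ directly from the hypothesis via \cite[Lemma 3.4]{BB24} rather than from algebraicity of the action, and it carries a global multiplicity $m\in\{1,\aleph_0\}$ instead of your pointwise $m_\zeta$ (ending with $m^2\cdot T_{\underline{\hg}}$, which changes nothing since multiples preserve factoriality).
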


\begin{proof}
	Select any $\xi\in\Oc$ and denote $\xi_0:=\xi\vert_\dg\in\dg^*$. 
	The restriction mapping $\iota^*\colon\gg^*\to\dg^*$  
	is open and surjective, therefore
	$\iota^*(\Oc)$ is an open dense subset of $\dg^*$. 
	On the other hand, since $[\ggalg,\ggalg]=[\gg,\gg]=\dg$, 
	we have $\Galg\dg\subseteq\dg$, and this implies that the map $\iota^*$ is $\Galg$-equivariant. 
	  By \cite[Lemma 3.4]{BB24}, 
	\begin{equation}
	\label{factor1_proof_eq1}
	\Oc=\Galg\xi=(\Galg+\dg^\perp)\xi,
	\end{equation}
and 
	$\iota^*(\Oc)=\Galg\xi_0$. 
	Thus $\Galg\xi_0\subseteq\dg^*$ is an open dense subset. 
	Since $D\subseteq\Galg$, we also have $D\Galg\xi_0=\Galg\xi_0$. 
	Then, since the quotient mapping $\dg^*\to\dg^*/D$ is an open surjective map, 
	it follows that $(\Galg\xi_0)/D\subseteq \dg^*/D$ is an open dense subset. 
	Taking into account the Kirillov homeomorphism $\kappa\colon \dg^*/D\to\widehat{D}$ and the fact that the support of the Plancherel measure of the nilpotent Lie group $D$ is equal to $\widehat{D}$, 
	it follows that the the complement of the subset $\kappa((\Galg\xi_0)/D)\subseteq\widehat{D}$ is negligible with respect to the Plancherel measure. 
	The homeomorphism $\kappa$ is $\Galg$-equivariant 
	by Lemma~\ref{equiv}, hence we get that  $\kappa((\Galg\xi_0)/D)=\Galg[\pi_0]$, where $[\pi_0]=\kappa(D\xi_0)\in\widehat{D}$. 
	
	We now select a Borel measurable field of unitary irreducible representations $(\pi(\zeta))_{\zeta\in\widehat{D}}$ with $[\pi(\zeta)]=\zeta$ for every $\zeta\in\widehat{D}$. 
	(See \cite[Lemma 8.6.2]{Di64}.) 
	Denoting by $\de\zeta$ the Plancherel measure on $\widehat{D}$ corresponding to a fixed Haar measure on $D$, we have 
	\begin{equation*}
		\lambda_D=m\cdot \mathop{\int^\oplus}\limits_{\widehat{D}}\pi(\zeta)\de\zeta
		=m\cdot\mathop{\int^\oplus}\limits_{\Galg[\pi_0]}\pi(\zeta)\de\zeta
	\end{equation*}
	where $m=\aleph_0$ if the group $D$ is noncommutative, and $m=1$ if the group $D$ is commutative. 
	For a locally compact group, the left regular representation can be obtained by inducing the trivial representation of the trivial subgroup; thus induction in stages gives $\lambda_G=\Ind_D^G(\lambda_D)$.
	We then obtain by \cite[\S 10, Thm. 10.1]{Ma52}, 
	\begin{equation}\label{factor1*}
		\lambda_G
		=m\cdot\mathop{\int^\oplus}\limits_{\Galg[\pi_0]}\Ind_D^G(\pi(\zeta))\de\zeta. 
	\end{equation}
	Since $\O:=\Bun(\Oc) \in(\Bun(\Oc)/G)^\approx$, we may apply the construction outlined in Remark~\ref{ell_constr} to obtain the semifinite factor representation 
	$T_{\underline{\hg}}:=\mathop{\int^\oplus}\limits_{\O}T(p)\de\mu_\O(p)$
	of $G$ with $\ell(\O)=[T_{\underline{\hg}}]^\frown\in\stackrel{\frown}{G}_\nor$. 
	We also note that, by \eqref{factor1_proof_eq1}, we have $\Oc\in\gg^*/(\Galg+\dg^\perp)$ hence we may apply Remark~\ref{ampl} with $\Omega:=\Oc$ to obtain the $\Galg$-invariant measure $\de\nu$ on $\Galg[\pi_0]$ for which \eqref{ampl_eq1} holds. 
	Then we have
	\begin{align*}
		\lambda_G & = m \cdot \mathop{\int^\oplus}\limits_{\Galg[\pi_0]} \Ind_D^G (\pi(\zeta) )\de \zeta 
		\stackrel {(*)}{\simeq} m \cdot\mathop{\int^\oplus}\limits_{\Galg[\pi_0]} \Ind_D^G (\pi(\zeta)) \de \nu (\zeta)
		\\
		& \stackrel {
			\eqref{ampl_eq1}
		}{\simeq} m^2 \cdot\mathop{\int^\oplus}\limits_{(\Bun (\Oc)/G)^\approx}  T(O)   \de O  
		\stackrel {(**) }{=}  m^2 \cdot T(\O) 
		\simeq m^2 \cdot T_{\hg}, 
	\end{align*}
	where the unitary equivalence $(*)$ is a consequence of Lemma~\ref{hp-2}, 
	while~$(**)$ 
	follows by the hypothesis $(\Bun (\Oc)/G)^\approx=\{\O\}$.
	Therefore $\lambda_G$ shares the properties of $T_{\underline{\hg}}$ of being a semifinite factor representation of~$G$, 
	and this completes the proof. 
\end{proof}

Lemma~\ref{factor2} below provides a converse to Lemma~\ref{factor1}. 

\begin{lemma}\label{factor2}
	Let $G$ be a 1-connected solvable Lie group. 
	If the left regular representation $\lambda_G\colon G\to\Bc(L^2(G))$ is a factor representation, then its quasi-equivalence class $[\lambda_G]^\frown\in\stackrel{\frown}{G}_\nor$ 
	is square integrable. 
	Furthermore  $[\lambda_G]^\frown=\ell(\O)$, 
	where  $\O=\Bun(\Oc)\in(\Bun(\Oc)/G)^\approx$ and $\Oc\in(\gg^*/G)^\sim$ is a coadjoint quasi-orbit which is an open dense subset of $\gg^*$. 
\end{lemma}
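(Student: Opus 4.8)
The plan is to reverse the chain of unitary equivalences established in the proof of Lemma~\ref{factor1}, using the central direct integral decomposition of $\lambda_G$ furnished by the Puk\'anszky theory together with Remark~\ref{ampl}. Unconditionally, induction in stages gives $\lambda_G=\Ind_D^G(\lambda_D)$, and the Plancherel decomposition of the nilpotent (hence type~$\I$) group $D$ yields, via \cite[\S 10, Thm.~10.1]{Ma52}, the decomposition $\lambda_G=m\cdot\int^\oplus_{\widehat{D}}\Ind_D^G(\pi(\zeta))\,\de\zeta$, where $\de\zeta$ is the Plancherel measure of $D$. Disintegrating $\de\zeta$ along the partition of $\widehat{D}$ into $\Galg$-orbits, and applying \eqref{ampl_eq1} fibrewise --- each orbit $\Galg[\pi_0]$ corresponding, through the $\Galg$-equivariant Kirillov homeomorphism of Lemma~\ref{equiv}, to a quasi-orbit $\Oc$ with $\iota^*(\Oc)=\Galg\xi_0$ --- I would assemble a central decomposition $\lambda_G\simeq\int^\oplus_{\RelS}m_\O\cdot\ell(\O)\,\de\mu(\O)$ whose base measure $\de\mu$ projects onto the Plancherel measure class of $D$ on $\widehat{D}/\Galg$.

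Since $\lambda_G$ is by hypothesis a factor representation, $\lambda_G(G)''$ has trivial centre, so this central decomposition must be concentrated at a single point $\O\in\RelS$. Writing $\Oc\in(\gg^*/G)^\sim$ for the unique coadjoint quasi-orbit with $\O\in(\Bun(\Oc)/G)^\approx$, concentration at one point forces both that the Plancherel measure of $D$ is carried by the single $\Galg$-orbit $\Galg[\pi_0]$ lying under $\Oc$, and that $(\Bun(\Oc)/G)^\approx=\{\O\}$ with $\O=\Bun(\Oc)$; thus $[\lambda_G]^\frown=\ell(\Bun(\Oc))$. To see that $\Oc$ is open and dense, I would use that the support of the Plancherel measure of $D$ equals all of $\widehat{D}$ by \cite[18.8.4]{Di64}, so the co-null orbit $\Galg[\pi_0]$ is dense; transporting through the Kirillov homeomorphism and the open surjection $\dg^*\to\dg^*/D$ shows $\Galg\xi_0=\iota^*(\Oc)$ is dense in $\dg^*$. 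Because $\ggalg$ is algebraic the $\Galg$-action on $\dg^*$ is rational, so its orbits are locally closed and a dense orbit is open; hence $\iota^*(\Oc)$ is open dense in $\dg^*$, and as $\iota^*$ is a continuous open surjection with $\Oc=(\iota^*)^{-1}(\Galg\xi_0)$, the quasi-orbit $\Oc$ is open dense in $\gg^*$.

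For square integrability, I would argue that the commutant of a factor is again a factor, so its centre is $\CC\cdot\1$; consequently every nonzero subrepresentation of the factor representation $\lambda_G$ has central support equal to $\1$ in $\lambda_G(G)'$, and is therefore quasi-equivalent to $\lambda_G$ itself. Since a normal factor representation is square integrable precisely when it is quasi-equivalent to a subrepresentation of the regular representation --- equivalently, when it is an atom of the Plancherel measure, which is exactly what the point-mass decomposition above expresses --- the class $[\lambda_G]^\frown$ is square integrable, completing the identification $[\lambda_G]^\frown=\ell(\O)$ with $\O=\Bun(\Oc)$ and $\Oc$ open dense. The main obstacle is the first step: justifying that the fibrewise decompositions of Remark~\ref{ampl} assemble measurably into the genuine central decomposition of $\lambda_G$, so that triviality of the centre of $\lambda_G(G)''$ really translates into a point mass on $\RelS$. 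This requires that the $\Galg$-orbit partition of $\widehat{D}$ be standard Borel enough to disintegrate the Plancherel measure and that the multiplicities $m_\O$ be controlled; it is also where the passage from a co-null orbit to an open orbit, via the algebraicity of the $\Galg$-action, is essential.
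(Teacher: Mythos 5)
Your skeleton is the same as the paper's: decompose $\lambda_G$ centrally over the $\Galg$-orbits in $\widehat{D}$, use factoriality to force the base measure to be a point mass, and use the fact that the Plancherel measure of $D$ has full support to extract density. The obstacle you flag at the end --- assembling the fibrewise equivalences of Remark~\ref{ampl} into a genuine central decomposition of $\lambda_G$ --- is real, and the paper does not assemble it by hand: it quotes \cite[Ch.~V, Lemma~9.1]{Pu71}, which supplies ready-made a measure $\de O$ on the countably separated Borel space $\widehat{D}/\Galg$ and a central decomposition $\lambda_G=m\cdot\int^\oplus_{\widehat{D}/\Galg}T(O)\,\de O$. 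Factoriality then gives $\dim L^\infty(\widehat{D}/\Galg,\de O)=1$, so $\de O$ is $\{0,1\}$-valued and hence a point mass because the base space is countably separated (\cite[Sect.~3.4]{Arv76}). If you cite Puk\'anszky's lemma instead of trying to rebuild it from Remark~\ref{ampl}, your main obstacle disappears.

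The one place where your reordering creates a genuine gap is the openness of $\Oc$. The paper gets square integrability \emph{first}, from \cite[Prop.~2.3]{Ros78} (your closing argument is essentially a re-proof of that proposition), and feeds it into \cite[Th.~3.1]{BB24} and \cite[Lemma~3.4]{BB24} to obtain, before any density argument, an open quasi-orbit $\Oc$ with $\ell(\Bun(\Oc))=[\lambda_G]^\frown$, with $\iota^*(\Oc)$ a single open $\Galg$-orbit and $\Oc=(\iota^*)^{-1}(\iota^*(\Oc))$. Density is then immediate from Step~4's intersection argument: the co-null orbit $\Galg\xi_0$ is dense (here the paper uses \cite[Th., p.~286]{Gr80} rather than algebraicity to see it is also open in $\widehat{D}$), it meets the open $\Galg$-orbit $\iota^*(\Oc)$, hence equals it. In your version, $\Oc$ is only defined as the quasi-orbit sitting under the point of concentration $\O\in\RelS$, and you assert $\iota^*(\Oc)=\Galg\xi_0$ and $\Oc=(\iota^*)^{-1}(\Galg\xi_0)$ without justification; for a quasi-orbit not yet known to be open it is not automatic that its image under $\iota^*$ is a single $\Galg$-orbit, nor that the quasi-orbit is $\iota^*$-saturated (these are exactly the conclusions of \cite[Lemma~3.4]{BB24}, whose hypothesis is openness). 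Establishing square integrability up front, as the paper does, is what makes that identification legitimate; with that reordering, and with \cite[Ch.~V, Lemma~9.1]{Pu71} in place of your ad hoc assembly, your argument matches the paper's.
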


\begin{proof}
	Step 1: Since $\lambda_G\colon G\to\Bc(L^2(G))$ is a factor representation,  
	it follows by \cite[Prop. 2.3]{Ros78} that $\lambda_G$ is a square-integrable representation. 
	By 
	\cite[Th. 3.1]{BB24}
	we then obtain an open coadjoint quasi-orbit $\Oc\in(\gg^*/G)^\sim$ for which $\O:=\Bun(\Oc)\in(\Bun(\Oc)/G)^\approx$ and $\ell(\O)=[\lambda_G]^\frown\in\stackrel{\frown}{G}_\nor$. 
	An application of 
	\cite[Lemma 3.4]{BB24} 
	for $\Omega:=\Oc$ shows that 
	$\iota^*(\Oc)\in\dg^*/\Galg$, 
	and moreover $\iota^*(\Oc)\subseteq\dg^*$ 
	is an open subset and 
	$\Oc=(\iota^*)^{-1}(\iota^*(\Oc))$. Here we recall that 
	$\iota^*\colon\gg^*\to\dg^*$ is the restriction map.
	It remains to prove that the subset $\Oc\subseteq\gg^*$ is dense. 
	
	Step 2: 
	We use again a Borel measurable field of unitary irreducible representations $(\pi(\zeta))_{\zeta\in\widehat{D}}$ with $[\pi(\zeta)]=\zeta$ for every $\zeta\in\widehat{D}$. 
	As shown in \cite[Ch. V, Lemma 9.1, page 603]{Pu71},  there exist a measure $\de O$ on the countably separated Borel space $\widehat{D}/\Galg$ and a $\Galg$-quasi-invariant measure $\nu_O$ on every $O\in\widehat{D}/\Galg$ satisfying the following conditions: 
	\begin{enumerate}[{\rm(a)}]
		\item\label{factor2_proof_item-a} 
		The Plancherel measure $\de\zeta$ decomposes as the continuous direct sum of the family $(\nu_O)_{O\in \widehat{D}/\Galg}$ with respect to the measure~$\de O$, in the sense of \cite[\S 11]{Ma52}. 
		\item\label{factor2_proof_item-b} 
		The direct integral 
		\begin{equation}\label{factor2_proof_eq1}
			M:=\mathop{\int^\oplus}\limits_{\widehat{D}/\Galg}T(O)\de O
		\end{equation}
		is a central decomposition of unitary representations of $G$, where we use the representations 
		$$T(O):=\Ind_D^G (U(O))\; \text{ with }\;
		U(O):=\mathop{\int^\oplus}\limits_O \pi(\zeta)\de\nu_O(\zeta).$$ 
		\item\label{factor2_proof_item-c} 
		We have $\lambda_G= m\cdot M$, where $m= \aleph_0$ if $D$ is not commutative, and $m=1$ otherwise.
		\end{enumerate}
	Using the hypothesis that $\lambda_G$ is a factor representation 
	along with~\eqref{factor2_proof_item-c}, it follows that the unitary representation~$M$ defined in~\eqref{factor2_proof_eq1} is a factor representation as well. 
	The condition that \eqref{factor2_proof_eq1} is a central decomposition means that $L^\infty(\widehat{D}/\Galg,\de O)$ embeds (via multiplication operators) into the centre of the von Neumann algebra generated by~$M$. 
	Since $M$ is  factor representation, we then obtain $\dim L^\infty(\widehat{D}/\Galg,\de O)=1$, 
	and this implies that the measure $\de O$ takes only the values $0$ and $1$. 
	We have already noted above that the Borel space $\widehat{D}/\Galg$ is countably separated, 
	and then every $\{0,1\}$-valued Borel measure on $\widehat{D}/\Galg$ is a point mass. 
	(See \cite[Sect. 3.4, Lemma, page 78]{Arv76}.) 
	Thus the measure $\de O$ is a point mass, which implies by~\eqref{factor2_proof_eq1} that there exists a unique $\Galg$-orbit $O_0=\Galg[\pi_0]\in \widehat{D}/\Galg$ with 
	$$M=T(O_0)=\Ind_D^G (U(O_0))=\Ind_D^G\Bigl(\mathop{\int^\oplus}\limits_{\Galg[\pi_0]} \pi(\zeta)\de\nu_{O_0}(\zeta)\Bigr).$$ 
	On the other hand, it follows by~\eqref{factor2_proof_item-a} that the Plancherel masure of~$\widehat{D}$ is supported by $\Galg[\pi_0]$. 
	This implies by \cite[Th., page 286]{Gr80} that 
	$\Galg[\pi_0]\subseteq \widehat{D}$ is an open subset. 
	Moreover, since the support of the Plancherel measure is equal to $\widehat{D}$ by \cite[18.8.4]{Di64}, it also follows that 
	$\Galg[\pi_0]$ is dense in $\widehat{D}$. 
	
	Step 3: 
	For arbitrary $\xi\in\dg^*$, let us denote by $q(\xi)\in\widehat{D}$ the point of the unitary dual that corresponds to the coadjoint $D$-orbit $D\xi\in\dg^*/D$ via the Kirillov homeomorphism $\kappa\colon \dg^*/D\mathop{\to}\limits^\sim\widehat{D}$. 
	The mapping  $q\colon\dg^*\to\widehat{D}$ defined in this way is continuous, open, and surjective, and it is also $\Aut(D)$-equivariant, in particular $\Galg$-equivariant by Lemma~\ref{equiv}.
	For arbitrary $\xi\in\dg^*$ we then obtain 
	\begin{align*}
		q^{-1}(\Galg q(\xi))
		&=\{\eta\in\dg^*\mid (\exists x\in\Galg)\quad q(\eta)= xq(\xi)\} \\
		&=\{\eta\in\dg^*\mid (\exists x\in\Galg)\quad q(\eta)= q(x\xi)\} \\
		&=\{\eta\in\dg^*\mid (\exists x\in\Galg)(\exists y\in D) \quad \eta= yx\xi\}
	\end{align*}
	hence, since $D\subseteq\Galg$, we have 
	\begin{equation*}
		q^{-1}(\Galg q(\xi))=\Galg\xi\text{ for all }\xi\in\dg^*. 
	\end{equation*}
	Selecting any $\xi_0\in\dg^*$ with $q(\xi_0)=[\pi_0]$, 
	we then obtain  $$q^{-1}(\Galg[\pi_0])=\Galg\xi_0\subseteq\dg^*.$$
	Since $q\colon\dg^*\to\widehat{D}$ is a continuous, open, and surjective mapping, 
	and we have established above that $\Galg[\pi_0]\subseteq \widehat{D}$ is an open dense subset, it follows that
	$\Galg\xi_0\subseteq\dg^*$ is an open dense subset as well. 
	
	Step 4: 
	Since $\Galg\xi_0\subseteq\dg^*$ is a dense subset, 
	while $\iota^*(\Oc)\subseteq\dg^*$ is an open subset, 
	we obtain $\Galg\xi_0\cap \iota^*(\Oc)\ne\emptyset$. 
	Since $\iota^*(\Oc)$ is a $\Galg$-orbit in $\dg^*$ (see Step 1), 
	we then obtain $\Galg\xi_0=\iota^*(\Oc)$. 
	Therefore $\Oc=(\iota^*)^{-1}(\Galg\xi_0)$. 
	Now, since the restriction mapping $\iota^*\colon\gg^*\to\dg^*$ is continuous, open, surjective, and $\Galg$-equivariant, while $\Galg\xi_0\subseteq\dg^*$ is an open dense subset, 
	the open subset $\Oc\subseteq\gg^*$ is  dense, as well.
	This completes the proof. 
\end{proof}

\begin{theorem}
	\label{factor3}
	If $G$ is a 1-connected solvable Lie group, then its left regular representation $\lambda_G\colon G\to\Bc(L^2(G))$ is a factor representation
	if and only if there exists a coadjoint quasi-orbit $\Oc\in(\gg^*/G)^\sim$  which is an open dense subset  of $\gg^*$ and satisfies $\Bun(\Oc)\in(\Bun(\Oc)/G)^\approx$. 
	If this is the case, then the quasi-equivalence class $[\lambda_G]^\frown\in\stackrel{\frown}{G}_\nor$ 
	is square integrable,  we have $[\lambda_G]^\frown=\ell(\O)$ 
	for  $\O:=\Bun(\Oc)$, and moreover $\Oc$ is the only open coadjoint quasi-orbit of~$G$. 
\end{theorem}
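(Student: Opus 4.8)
The plan is to obtain the stated equivalence by combining the two preceding lemmas, and then to read off the remaining assertions with only the uniqueness clause requiring a genuinely new (and short) argument. For one direction, if $\lambda_G$ is a factor representation, then Lemma~\ref{factor2} produces a coadjoint quasi-orbit $\Oc\in(\gg^*/G)^\sim$ that is an open dense subset of $\gg^*$ and satisfies $\Bun(\Oc)\in(\Bun(\Oc)/G)^\approx$, which is exactly the right-hand condition of the equivalence; conversely, given such an $\Oc$, Lemma~\ref{factor1} yields that $\lambda_G$ is a factor representation. This settles the ``if and only if''. Moreover, under the assumption that $\lambda_G$ is a factor representation, Lemma~\ref{factor2} already records that $[\lambda_G]^\frown$ is square integrable and that $[\lambda_G]^\frown=\ell(\O)$ with $\O=\Bun(\Oc)$, so these two clauses need nothing beyond citing that lemma.

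The only clause not handed over directly by the lemmas is the uniqueness of the open coadjoint quasi-orbit, and here I would carry out the single piece of new work, which is purely topological. By construction the coadjoint quasi-orbits form a partition $\gg^*=\bigsqcup_{\Oc'}\Oc'$, so distinct quasi-orbits are disjoint. Let $\Oc'\in(\gg^*/G)^\sim$ be any open coadjoint quasi-orbit. Being an equivalence class it is nonempty, and as an element of the partition it is either equal to $\Oc$ or disjoint from it. The latter is impossible: $\Oc$ is dense in $\gg^*$, hence meets every nonempty open subset, in particular the nonempty open set $\Oc'$. Therefore $\Oc'=\Oc$, which is precisely the asserted uniqueness.

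I would emphasize that the substantive difficulties of this theorem are entirely absorbed into Lemmas~\ref{factor1} and~\ref{factor2}, through the Puk\'anszky decomposition, the Plancherel-support arguments, and the identification of the relevant $\Galg$-orbit in $\widehat{D}$. Consequently the theorem itself presents no serious obstacle: every clause except uniqueness is a direct citation, and the uniqueness clause reduces to the elementary observation that a dense subset of $\gg^*$ cannot be disjoint from a nonempty open subset. If anything, the only point worth double-checking is that quasi-orbits are by definition nonempty (so that the partition argument applies verbatim), which is immediate since they are equivalence classes.
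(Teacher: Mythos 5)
Your proposal is correct and follows essentially the same route as the paper: the equivalence and the square-integrability/$\ell(\O)$ clauses are read off from Lemmas~\ref{factor1} and~\ref{factor2}, and the uniqueness of the open quasi-orbit is obtained by the same density-plus-disjointness argument. No gaps.
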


\begin{proof}
	The assertions follow by Lemmas \ref{factor1} and \ref{factor2}. 
	The uniqueness property of $\Oc$ follows from the fact that $\Oc\subseteq\gg^*$ is dense, hence $\Oc$ has a nonempty intersection with every open subset of $\gg^*$, while the coadjoint quasi-orbits are mutually disjoint, hence no coadjoint quasi-orbit different from $\Oc$ could be an open subset of~$\gg^*$. 
\end{proof}

\begin{corollary}\label{typeI-fact}
	If the regular representation of a 1-connected solvable Lie group is a factor representation, then  it is standard hyperfinite type~$\II_\infty$.
\end{corollary}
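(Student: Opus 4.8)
The plan is to verify separately the three features in the conclusion — that the factor $\Mc:=\lambda_G(G)''$ is hyperfinite, that it is of type $\II_\infty$, and that the given representation on $L^2(G)$ is its standard form — and then to invoke Connes' uniqueness of the injective $\II_\infty$ factor to identify $\Mc$. Two of these are quick. Since $G$ is solvable it is amenable, so $\Mc$ is injective, hence hyperfinite by Connes' theorem; and the left regular representation realizes $\Mc$ on $L^2(G)$ in standard form, with commutant the right regular representation. For the type, I would first record that $\Mc$ is semifinite, hence not of type $\III$: by Theorem~\ref{factor3} and Lemma~\ref{factor1} we have $[\lambda_G]^\frown=\ell(\O)=[T_{\underline{\hg}}]^\frown$ for $\O=\Bun(\Oc)$, and $T_{\underline{\hg}}$ is a semifinite factor representation by Remark~\ref{ell_constr}. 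Thus $\Mc$ is a semifinite factor, so it is of type $\I$ or type $\II$, and it remains to exclude type $\I$ and to check that $\Mc$ is properly infinite.

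The heart of the matter is to show that $\Mc$ is not of type $\I$. Here I would use the geometry supplied by Theorem~\ref{factor3}: $\Oc$ is the unique open coadjoint quasi-orbit, it is open and dense in $\gg^*$, and it is a proper subset of $\gg^*$, since the origin $0\in\gg^*$ is a coadjoint fixed point whose quasi-orbit $\{0\}$ is different from $\Oc$. Consequently, for every $\xi\in\Oc$ one has $\overline{G\xi}=\overline{\Oc}=\gg^*$, so each coadjoint orbit contained in $\Oc$ is dense in $\gg^*$; in particular no such orbit is closed, and when $\Oc$ is not a single open orbit these orbits fail to be locally closed. The torus bundle $\tau\vert_{\Bun(\Oc)}\colon\Bun(\Oc)\to\Oc$ is then wrapped densely by the $G$-action, so the generalized orbit $\O=\Bun(\Oc)$ is a proper $G$-orbit closure rather than a genuine orbit, and Puk\'anszky's normal representation $\ell(\O)$ attached to such a proper orbit closure is of type $\II$.

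I expect this type-$\I$ exclusion to be the main obstacle, for two reasons. First, it requires making precise, within Puk\'anszky's framework, the dichotomy that $\ell(\O)$ is of type $\I$ exactly when $\O$ is a genuine (locally closed) orbit with trivial torus fibre, and of type $\II$ when $\O$ is a proper orbit closure; this is the step where the positive-dimensional torus fibre, equivalently the non-local-closedness of the constituent orbits, must be converted into the continuity of the trace of $\Mc$. Second, one must rule out the borderline Frobenius configuration in which $\Oc$ could be a single open (hence locally closed) coadjoint orbit: in that situation $\ell(\O)$ would a priori be of type~$\I$, so one has to argue that an open coadjoint orbit forces the regular representation to fail to be a factor, consistently with the type~$\I$, non-factorial behaviour of Frobenius groups studied in Section~\ref{Sect4}. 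Once type~$\I$ is excluded, the remaining point is immediate: $G$ is a positive-dimensional, hence non-discrete, locally compact group, so its group von Neumann algebra carries no finite trace and $\Mc$ is not finite; being semifinite, not of type~$\I$, and not finite, $\Mc$ is of type~$\II_\infty$. Connes' uniqueness of the injective type~$\II_\infty$ factor then identifies $\Mc$ with the hyperfinite $\II_\infty$ factor, realized on $L^2(G)$ in its standard form.
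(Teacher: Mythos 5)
Your overall architecture (semifinite via Puk\'anszky, hyperfinite via Connes, standard form from the regular representation, then exclude types $\I$ and $\II_1$) matches the paper's, but the step you yourself identify as ``the heart of the matter'' --- excluding type $\I$ --- is left genuinely open, and your proposed route to close it does not work as stated. You reduce the problem to (a) a dichotomy asserting that $\ell(\O)$ is type $\II$ whenever the constituent coadjoint orbits of $\Oc$ fail to be locally closed, and (b) ruling out the borderline case where the dense quasi-orbit $\Oc$ is a single open coadjoint orbit. Neither is established. For (a), non-local-closedness of orbits is exactly the kind of hypothesis that, in Puk\'anszky's theory, is compatible with \emph{either} type $\I$ or type $\II$ behaviour of the normal representation $\ell(\O)$ (the torus fibre and the isotropy data intervene), so the asserted dichotomy needs a proof, not just a statement. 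For (b), the claim you would need --- ``an open coadjoint orbit forces the regular representation to fail to be a factor'' --- is not proved anywhere in your argument and is delicate: Example~\ref{counterexamples} shows groups with open coadjoint orbits whose regular representations \emph{are} (type $\I$) factors, so whatever rules this out must use 1-connectedness in an essential way.

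The paper closes this gap by an entirely different, parity-based argument: if $\lambda_G$ were a type $\I$ factor representation, then by Theorem~\ref{factor3} together with \cite[Cor. 5.6]{BB24} one would have $[\lambda_G]^\sim=\ell(\Oc)$ for a \emph{unique} simply connected open dense coadjoint orbit $\Oc$; but \cite[Prop. 2.6]{BB24} says the number of simply connected open coadjoint orbits of a 1-connected solvable Lie group is always even, so ``exactly one'' is impossible. This single stroke disposes of both (a) and (b) simultaneously and is the idea missing from your proposal. A secondary imprecision: your exclusion of type $\II_1$ via ``$G$ is non-discrete, hence its group von Neumann algebra carries no finite trace'' is not a valid general principle (for $G=\RR$ the algebra $\lambda_G(G)''\simeq L^\infty(\widehat{\RR})$ is finite); the paper instead uses that $G$ is non-abelian and invokes \cite[Cor. 2.9]{BB21} to rule out a finite faithful trace on $C^*(G)$.
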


\begin{proof}
	Assume that $G$ is a solvable Lie group such that its left regular representation
	$\lambda_G\colon  G \to \Bc(L^2(G))$  is a factor representation.
	First note that in this case  $G$ cannot be abelian, so we let $G$ be a non-abelian solvable Lie group.
	
	Assume  in addition that 
	$\lambda_G$  is  type~$\I$. 
	Then by Theorem~\ref{factor3} and  
	\cite[Cor. 5.6]{BB24}, 
	$[\lambda_G]^\sim =\ell(\Oc)$ where 
	$\Oc$ is a unique simply connected open and dense coadjoint orbit.
	However, 
    by \cite[Prop. 2.6]{BB24}, 
	the number of simply connected open coadjoint orbits is even. 
	We have thus obtained a contradiction, hence $\lambda_G$ must be of type $\II$ or $\III$. 
	
	On the other hand,  $\lambda_G$ cannot be of type $\III$ (see \cite[Thm. 5, \S 9]{Pu71}). 
	Furthermore, since $G$ is not abelian,  there is no finite faithful  trace on  $C^*(G)$ 
	(see \cite[Cor.~2.9]{BB21}), hence 
	$\lambda_G$ is not type $\II_1$. 
	Finally, the von Neumann algebra $\lambda_G(G)''$ is hyperfinite by \cite[Cor.7]{C76} and is standard by \cite[Cor. 3.5.6]{ES92}.
\end{proof}

\begin{corollary}\label{typeI-fact2}
	Let $G$ be a 1-connected solvable Lie group with its Lie algebra~$\gg$. 
	If the regular representation of $G$ is a factor representation, then the universal associative enveloping algebra $U(\gg)$ is primitive 
	and the center of the division ring of $U(\gg)$ is equal to $\RR\1$. 
	Moreover, every Casimir function on the Lie-Poisson space $\gg^*$ is constant.
\end{corollary}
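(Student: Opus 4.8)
The plan is to derive all three conclusions from the single geometric input furnished by Theorem~\ref{factor3}: the existence of a coadjoint quasi-orbit $\Oc\in(\gg^*/G)^\sim$ that is open and dense in $\gg^*$. The first thing I would record is that for any $\xi_0\in\Oc$ the coadjoint orbit $G\xi_0$ is itself dense in $\gg^*$. Indeed, by the definition of the quasi-orbit one has $\Oc\subseteq\overline{G\xi_0}$, and since $\overline{\Oc}=\gg^*$ this forces $\overline{G\xi_0}=\gg^*$. The assertion on Casimir functions is then immediate: a Casimir function $c\in\Ci(\gg^*)$ is continuous and $G$-invariant, hence constant on the connected orbit $G\xi_0$, and therefore constant on its closure $\gg^*$.

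For the two algebraic assertions I would pass to the field $\RR(\gg^*)^G$ of $G$-invariant rational functions, equivalently the rational Poisson center of $\RR(\gg^*)$, and show it reduces to the constants. A $\gg$-invariant rational function $F$ satisfies $F\circ g=F$ as an identity of rational functions for every $g\in G$ (integrate the infinitesimal invariance $X\cdot F=0$ along the one-parameter subgroups $\exp(tX)$, using that $G$ is connected), so $F$ takes the value $F(\xi_0)$ on the intersection of $G\xi_0$ with its locus of regularity; this set is dense, so $F-F(\xi_0)$ is a rational function vanishing on a dense set, whence $F\equiv F(\xi_0)$. Thus $\RR(\gg^*)^G=\RR\1$. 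Since $U(\gg)$ is a Noetherian domain it admits a division ring of fractions, and I would then use that, for solvable Lie algebras, the associated graded of the center of that division ring embeds (via the symbol map) into the rational Poisson center $\RR(\gg^*)^G$; triviality of the latter then forces the center of the division ring of $U(\gg)$ to equal $\RR\1$.

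Primitivity of $U(\gg)$ I would obtain from the Dixmier--Moeglin equivalence for enveloping algebras of finite-dimensional Lie algebras: as $U(\gg)$ is a domain, $\{0\}$ is a prime ideal, and it is primitive precisely when it is rational, i.e.\ when the center of the division ring of fractions coincides with the base field $\RR$, which is exactly what the previous step established. Alternatively one may argue via the orbit method: the dense orbit $G\xi_0$ produces an irreducible representation whose annihilator in $U(\gg)$ has associated graded contained in the ideal of the orbit closure $\overline{G\xi_0}=\gg^*$, namely $\{0\}$, so that annihilator is $\{0\}$ and $U(\gg)$ is primitive.

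The delicate point, and the step I expect to require the most care, is that everything takes place over $\RR$ and that the genuinely dense object is the orbit of the Lie group $G$, not of its algebraic hull $\Galg$. The relevant open orbit is $\Oc=\Galg\xi_0$ (cf.\ the proof of Lemma~\ref{factor1}), an \emph{open} $\Galg$-orbit, whereas the $G$-orbit inside it is merely dense; correspondingly the transcendence degree of $\RR(\gg^*)^G=\RR(\gg^*)^{\Galg}$ equals the codimension of the generic $\Galg$-orbit, which vanishes because $\Oc$ is open, and this must be kept strictly distinct from the index of $\gg$ (the codimension of the generic $G$-orbit), which is typically positive here and records precisely the absence of any open $G$-orbit. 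Checking that the identification of the center of the division ring with $\RR(\gg^*)^G$ and the Dixmier--Moeglin equivalence are applied in their correct form over the real field is where the real work lies; should a cleaner route be preferred, one may complexify, noting that the open $\Galg$-orbit complexifies to a Zariski-dense orbit and that the conclusions descend to $\gg$.
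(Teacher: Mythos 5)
Your argument for the Casimir assertion coincides with the paper's: both rest on the observation that the quasi-orbit $\Oc$ is contained in $\overline{G\xi_0}$ for any $\xi_0\in\Oc$, so density of $\Oc$ forces density of the single coadjoint orbit $G\xi_0$, and a continuous $G$-invariant function is then constant. For the two algebraic assertions you take a genuinely different route. The paper first checks that $\gg$ has trivial centre (via Theorem~\ref{factor3} and \cite[Th.~3.1]{BB24}; this also drops out of your density argument, since a nonzero central element of $\gg$ gives a nonconstant linear Casimir function), realizes $\gg\cong\ad_\gg(\gg)\subseteq\Der(\gg)$, and shows that the algebraic hull $\Galg_0\subseteq\Aut(\gg)$ of the adjoint group has $\Oc=\Galg_0\xi$ as an open dense orbit in $\gg^*$; at that point both conclusions are exactly the statement of the theorem of Ooms cited as \cite{Oo76}. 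You instead propose to re-derive that theorem: triviality of $\RR(\gg^*)^G$ from the dense orbit (correct, and the identification $\RR(\gg^*)^G=\RR(\gg^*)^{\Galg_0}$ is legitimate because the stabilizer of a rational function is Zariski closed), then $Z(D(\gg))=\RR\1$ via a symbol-map embedding into $\RR(\gg^*)^G$, then primitivity of $\{0\}$ via the Dixmier--Moeglin equivalence.

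The residual gap is exactly where you locate it, and it cannot be left as a remark. The claim that (the associated graded of) the centre of the division ring of fractions of $U(\gg)$ embeds into $\RR(\gg^*)^G$ is not an off-the-shelf fact in the form you state it: to make it precise one needs that every central element of $D(\gg)$ is a quotient of two semi-invariants of $U(\gg)$ of equal weight, so that the quotient of their principal symbols is a genuine $G$-invariant rational function, and one then needs a descent argument on degrees to pass from triviality of the graded object to $Z(D(\gg))=\RR\1$. Likewise the implication ``rational $\Rightarrow$ primitive'' of Dixmier--Moeglin must be invoked in its version over a non-algebraically-closed field of characteristic zero. These are precisely the ingredients that \cite{Oo76} packages, so your argument, once completed, amounts to reproving the result the paper quotes. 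As a proof of the Corollary it is acceptable only if you either supply precise references for those two steps or, as the paper does, verify the geometric hypothesis (an open dense orbit of the adjoint algebraic group, which requires the trivial-centre reduction so that this group is well defined) and then cite \cite{Oo76} directly.
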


\begin{proof}
The regular representation of $G$ is a factor representation hence, by Theorem~\ref{factor3} and \cite[Th. 3.1]{BB24}, the centre of $G$ is trivial, 
and then the centre of~$\gg$ is trivial, too. 
Then the 
adjoint representation 
 $\ad_\gg\colon\gg\to\Der(\gg)$ is faithful. 
In particular, the linear Lie algebra $\gg_0:=\ad_\gg(\gg)\subseteq\Der(\gg)$ is isomorphic to~$\gg$, and we denote by $G_0$ the 1-connected, solvable Lie group whose Lie algebra is~$\gg_0$, hence $G_0$ and $G$ are isomorphic Lie groups. 

Let us denote by $\ggalg\subseteq\Der(\gg)$ the algebraic hull of~$\gg_0$.  
Then the auxiliary group~$\Galg$ in 
Remark~\ref{ampl} 
may be taken as the 1-connected Lie group whose Lie algebra is~$\ggalg$. 
If we denote by $\Galg_0\subseteq\Aut(\gg)$ the integral (connected) subgroup of $\Aut(\gg)$ whose Lie algebra is $\ggalg$, then we have the universal covering homomorphism $q\colon \Galg\to\Galg_0$, satisfying
$\Ad_{\Galg_0}\circ q=\Ad_{\Galg}\colon \Galg\to\Aut(\ggalg)$. 
Since $\gg_0$ is an ideal of $\ggalg$, 
we then obtain coadjoint actions of $\Galg_0$ and $\Galg$ on $\gg_0^*$ 
with 
\begin{equation}
\label{typeI-fact2_proof_eq1}
(\forall \xi\in\gg_0^*)\quad \Galg\xi=\Galg_0\xi
\end{equation}
The Lie algebras $\gg$ and $\gg_0$ are isomorphic, 
hence, by Theorem~\ref{factor3}, there exists $\Oc\in(\gg_0/G_0)^*$ 
which is an open dense subset of $\gg_0^*$. 
Then, by \cite[Eq. (2.1)--(2.2)]{BB24}, we have $\widetilde{G_0\xi}=\gg_0^*$ and $\Oc=\Galg\xi$ if $\xi\in\Oc$. 
Then, by \eqref{typeI-fact2_proof_eq1}, $\Oc=\Galg_0\xi$ is an open dense orbit of $\Galg_0$ in~$\gg_0^*$,  
and the assertions on the universal associative enveloping algebra $U(\gg)$ follow by \cite[Thm.]{Oo76}. 

Finally, we recall from the beginning of the Introduction that a function $c\in\Ci(\gg^*)$ is a Casimir function if and only if $c(g\xi)=c(\xi)$ for all $\xi\in\gg^*$ and $g\in G$. 
Since the function $c\colon\gg^*\to\RR$ is in particular continuous, it follows that for every $\xi\in \gg^*$ the restriction of $c$ to $\overline{G\xi}$ is constant. 
Recalling that the coadjoint quasi-orbit of every $\xi\in\gg^*$ is contained in $\overline{G\xi}$, it the follows that $c$ is constant on every coadjoint quasi-orbit. 
By  Theorem~\ref{factor3}, there exists a coadjoint quasi-orbit which is dense in $\gg^*$, 
hence we obtain that the function $c\colon\gg^*\to\RR$ is constant. 
\end{proof}

\begin{lemma}\label{fact-prim}
Let $G$ be an amenable, separable, locally compact group with its regular representation $\lambda_G$. 
If $\lambda_G$ is a factor representation, then the group $C^*$-algebra of $G$ is primitive. 
\end{lemma}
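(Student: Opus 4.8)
The plan is to work inside the group $C^*$-algebra and to show that the zero ideal of $C^*(G)$ is primitive, which is exactly the assertion that $C^*(G)$ is primitive. Two inputs will be combined: amenability, which makes $\lambda_G$ faithful on $C^*(G)$, and factoriality, which makes $\ker\lambda_G$ a prime ideal. First I would note that, $G$ being amenable, the canonical surjection $C^*(G)\to C^*_r(G)$ is an isomorphism (Hulanicki's theorem; see e.g. \cite{BkH20}). Hence the integrated form $\lambda_G\colon C^*(G)\to\Bc(L^2(G))$ is injective, i.e. $\ker\lambda_G=\{0\}$.

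The core step is to verify that the kernel of any nonzero factor representation $\pi\colon A\to\Bc(H)$ of a $C^*$-algebra $A$ is a prime ideal. For a closed two-sided ideal $I$ of $A$, the subspace $\overline{\pi(I)H}$ is invariant under $\pi(A)$ (because $I$ is a left ideal, so $\pi(a)\pi(x)=\pi(ax)\in\pi(I)$) and under $\pi(A)'$ (because each $T\in\pi(A)'$ satisfies $T\pi(x)=\pi(x)T$ for $x\in I$); hence the orthogonal projection $P_I$ onto $\overline{\pi(I)H}$ lies in $\pi(A)'\cap\pi(A)''$, the centre of the factor $\pi(A)''$, and is therefore $0$ or $1$. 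Thus for every ideal $I$ one has either $\pi(I)=\{0\}$ (when $P_I=0$) or $\overline{\pi(I)H}=H$ (when $P_I=1$). Now suppose $I_1,I_2$ are ideals with $I_1\cap I_2\subseteq\ker\pi$; from $I_1I_2\subseteq I_1\cap I_2$ we get $\pi(I_1)\pi(I_2)=\{0\}$. If neither $I_j$ lay in $\ker\pi$, then $\overline{\pi(I_2)H}=H$, and $\pi(I_1)\pi(I_2)=\{0\}$ would force $\pi(I_1)\overline{\pi(I_2)H}=\pi(I_1)H=\{0\}$, contradicting $\pi(I_1)\neq\{0\}$. Hence $\ker\pi$ is prime.

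Applying this to $\pi=\lambda_G$ shows that $\{0\}=\ker\lambda_G$ is a prime ideal of $C^*(G)$. Finally, since $G$ is separable the $C^*$-algebra $C^*(G)$ is separable, and in a separable $C^*$-algebra every prime ideal is primitive (Dixmier; cf. \cite{Di64}). Therefore $\{0\}$ is a primitive ideal, i.e. $C^*(G)$ admits a faithful irreducible representation, which is the claim.

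I expect the central-projection argument of the second paragraph to be the main (though still routine) obstacle, since it is precisely there that the factoriality hypothesis is used: one must set up carefully the double invariance of $\overline{\pi(I)H}$ under $\pi(A)$ and $\pi(A)'$ so that $P_I$ lands in the centre $\pi(A)'\cap\pi(A)''$ of the factor. The remaining two ingredients, Hulanicki's theorem and the coincidence of prime and primitive ideals in the separable setting, are standard and only need to be invoked.
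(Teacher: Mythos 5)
Your proof is correct and follows essentially the same route as the paper: amenability gives faithfulness of $\lambda_G$ on $C^*(G)$, separability of $G$ gives separability of $C^*(G)$, and then the kernel of the factor representation is identified as a primitive ideal. The only difference is that you unpack the paper's single citation of Dixmier (\cite[Th.~2, Cor.~3]{Di60}, which directly asserts that kernels of factor representations of separable $C^*$-algebras are primitive) into two steps --- a hands-on central-projection argument showing the kernel is prime, followed by the prime-implies-primitive theorem for separable $C^*$-algebras --- and that argument is carried out correctly.
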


\begin{proof}
Let $\lambda_G\colon C^*(G)\to\Bc(L^2(G))$ be the faithful $*$-representation generated by the regular representation. 
Since $G$ is separable, it follows that $C^*(G)$ is separable, 
by \cite[13.9.2]{Di64}. 
Then, by 
\cite[Th. 2, Cor. 3]{Di60},  the kernel of the factor representation $\lambda_G\colon C^*(G)\to\Bc(L^2(G))$ is a primitive ideal of $C^*(G)$.
Thus $\{0\}$ is a primitive ideal of $C^*(G)$ and this completes the proof. 
 \end{proof}

\begin{corollary}\label{primitive}
Let $G$  be a 1-connected solvable Lie group such that  there exists a coadjoint quasi-orbit $\Oc\in(\gg^*/G)^\sim$  which is an open dense subset  of $\gg^*$ and satisfies $\Bun(\Oc)\in(\Bun(\Oc)/G)^\approx$. 
Then its $C^*$-algebra $C^*(G)$ is primitive.
\end{corollary}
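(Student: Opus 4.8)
The plan is to recognize that this corollary sits at the confluence of the two immediately preceding results, Theorem~\ref{factor3} and Lemma~\ref{fact-prim}, and that the hypothesis imposed on $G$ is verbatim the geometric condition appearing in the characterization of Theorem~\ref{factor3}. So my first move is to invoke Theorem~\ref{factor3}: since there exists a coadjoint quasi-orbit $\Oc\in(\gg^*/G)^\sim$ that is open and dense in $\gg^*$ and satisfies $\Bun(\Oc)\in(\Bun(\Oc)/G)^\approx$, the left regular representation $\lambda_G\colon G\to\Bc(L^2(G))$ is a factor representation. This requires no new work beyond matching the statements.

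Next I would set up the application of Lemma~\ref{fact-prim}, whose hypotheses are that $G$ be an amenable, separable, locally compact group. All three properties hold for the standing object $G$, a 1-connected solvable Lie group, for classical structural reasons: a Lie group is locally compact by definition, it is second countable (hence separable) as a finite-dimensional smooth manifold, and any solvable Lie group is amenable (being a successive extension of abelian groups, each of which is amenable, and amenability is preserved under extensions). I would state these three facts explicitly but without belaboring their proofs, since they are standard.

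With the factoriality of $\lambda_G$ established in the first step and the hypotheses of Lemma~\ref{fact-prim} verified in the second, the conclusion follows directly: Lemma~\ref{fact-prim} yields that the group $C^*$-algebra $C^*(G)$ is primitive. There is no genuine obstacle in this argument; the only point needing care is the bookkeeping that the geometric hypothesis of the corollary is exactly what Theorem~\ref{factor3} converts into factoriality of the regular representation, after which Lemma~\ref{fact-prim} is a black box. In short, the corollary is the composition $\text{(geometric hypothesis)} \Rightarrow \text{($\lambda_G$ factor)} \Rightarrow \text{($C^*(G)$ primitive)}$, with Theorem~\ref{factor3} supplying the first implication and Lemma~\ref{fact-prim} the second.
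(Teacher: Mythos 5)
Your proposal is correct and follows exactly the paper's own proof, which simply cites Theorem~\ref{factor3} to get factoriality of $\lambda_G$ and then Lemma~\ref{fact-prim} to conclude primitivity of $C^*(G)$; your additional verification that a 1-connected solvable Lie group is amenable, separable, and locally compact is standard and left implicit in the paper.
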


\begin{proof}
This is a direct consequence of Theorem~\ref{factor3} and Lemma~\ref{fact-prim}.
\end{proof}

\begin{example}\label{many}
\normalfont 
We constructed  in \cite[Ex. 7.2--7.3]{BB24} 
a family of semidirect products of abelian Lie groups $G=\Vc\rtimes A$, 
where $\dim A$ is an arbitrary integer~$\ge3$ while $\Vc$ is the nilradical of $G$, 
and each one of these groups $G$ has  a  coadjoint quasi-orbit $\Oc\in(\gg^*/G)^\sim$ with the  properties: 
\begin{enumerate}[{\rm(i)}]
	\item\label{item_i} $\Oc\subseteq\gg^*$ is a dense open subset; 
	\item\label{item_ii} $\Oc\not\in\gg^*/G$; 
	\item\label{item_iii}  if $(p,\xi)\in\Oc\subseteq\Vc^*\times\ag^*=\gg^*$, then the coadjoint isotropy group $G(p,\xi)$ is abelian and connected. 
\end{enumerate}
It follows by Theorem~\ref{factor3} that the regular representations of every solvable Lie group $G$ with these properties is a factor representation. 
\end{example}

\begin{example}\label{counterexamples}
\normalfont
Let $\KK\in\{\RR,\CC\}$. The semidirect product group 
$$\Gamma_\KK:=\KK\rtimes\KK^\times$$
with the product $(b_1,a_1)\cdot(b_2,a_2)=(b_1+a_1b_2,a_1a_2)$ for all $b_1,b_1\in\KK$, $a_1,a_2\in\KK^\times$, is a solvable Lie group of type~\I, whose unitary dual space 
consists of an open point $[\pi_0]$ and the set of 1-dimensional unitary representations of $\Gamma_\KK$. 
The regular representation $\lambda_{\Gamma_\KK}$ is unitary equivalent to a countably infinite multiple of the unitary irreducible representation $\pi_0$, hence the group von Neumann algebra $\lambda_{\Gamma_\KK}(\Gamma_\KK)''\subseteq\Bc(L^2(\Gamma_\KK))$ is a type~\I\ factor. 
See \cite[\S 3, Prop. 4]{DuMoo76}, \cite[Ex. 6.4]{BB18}, and the references therein. 

If $\KK=\RR$, the group $\Gamma_\RR$ is simply connected, but not connected. 
Its connected $\1$-component $(\Gamma_\RR)_\1=\RR\rtimes\RR_+^\times$ 
is a 1-connected solvable Lie group whose unitary dual space has two open points. 
If $\Gamma=\CC$, the group $\Gamma_\CC$ is connected, but not simply connected. 
Its universal covering group $\widetilde{\Gamma_\CC}$ is a 1-connected solvable Lie group discussed in \cite[Ex. 7.1]{BB24}, and its unitary dual space 
has no open points. 
Summarizing, none of the solvable Lie groups $\Gamma_\RR,\Gamma_\CC,(\Gamma_\RR)_\1,\widetilde{\Gamma_\CC}$ satisfies the hypotheses of Theorem~\ref{factor3}, although all of them have open coadjoint orbits, and for the first two of them the regular representation is a factor representation. 
\end{example}

\begin{remark}
\normalfont
As already mentioned in Introduction, there is no available characterization so far of the locally compact groups whose regular representation is a factor representation.
For any countable 
discrete group, its regular representation is a factor representation if and only if the group under consideration is an ICC (infinite conjugacy classes) group, that is,  the conjugacy class of every element different from the unit element is infinite, 
by 
\cite[Lemma 5.3.4]{MvN43}.

In the case of countable discrete groups satisfying the above ICC condition, 
the factor generated by the regular representations is always type~\II$_1$, 
and a great variety of mutually non-isomorphic factors arise in this way. 
In the case of solvable Lie groups, if the regular representation is a factor representation then the corresponding factor is always isomorphic to the hyperfinite type~\II$_\infty$ factor (Corollary~\ref{typeI-fact}). 
While Example~\ref{many} shows many specific solvable Lie groups with that property, 
the Lie groups with factor regular representation  are not classified yet. 
\end{remark}

\section{Regular representation of solvable Lie groups with open coadjoint orbits}
\label{Sect4}

In this final section we study the regular representation of solvable Lie groups that have open coadjoint orbits. 
We also include some relevant examples. 

Our  characterization of the 1-connected solvable Lie groups with factor regular representation involves the presence of an open coadjoint quasi-orbit which is not a coadjoint orbit. 
	Therefore, it is natural to study the von Neumann algebra generated by the regular representation in the complementary case of solvable Lie groups that have open coadjoint orbits. 
	See \cite{FO22} and \cite{FV21} for applications of these groups in the theory of frames. 
	In this case we find that the corresponding von Neumann algebra 
	is always type \I\ (Corollary~\ref{regI-cor}), but we prove by example that a 
	Frobenius Lie group may not be type~$\I$ (Example~\ref{exF}).

\begin{proposition}
	\label{regI}
	Let $G$ be a 1-connected solvable Lie group 
	and denote 
	$$E:=\{\xi\in\gg^*\mid [G(\xi):\overline{G}(\xi)]<\infty\text{ and }G\xi\text{ is locally closed in }\gg^*\}.$$
	If the Lebesgue measure of the set $\gg^*\setminus E$ is zero, then the von Neumann algebra generated by the left regular representation~$\lambda_G$  is type~$\I$. 
\end{proposition}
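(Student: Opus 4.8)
The plan is to read off the type of $\lambda_G$ from the Puk\'anszky central decomposition already used in the proof of Lemma~\ref{factor2}, and to show that the genericity hypothesis on $E$ forces almost every central component to be of type~$\I$. Concretely, by \cite[Ch.~V, Lemma~9.1]{Pu71} (as recorded in Step~2 of the proof of Lemma~\ref{factor2}) one has a central decomposition
\begin{equation*}
\lambda_G=m\cdot\mathop{\int^\oplus}\limits_{\widehat{D}/\Galg}T(O)\,\de O,
\end{equation*}
where each central component $T(O)$ is a factor representation, quasi-equivalent to the normal factor representation $T_{\underline{\hg}}=\ell(\O)$ attached (via Remark~\ref{ell_constr}) to the generalized coadjoint orbit $\O$ lying over the quasi-orbit $\Oc$ that corresponds to the $\Galg$-orbit $O\subseteq\widehat{D}$. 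Since the type of a von Neumann algebra is compatible with central direct integrals, $\lambda_G(G)''$ is of type~$\I$ as soon as $T(O)$ is of type~$\I$ for $\de O$-almost every~$O$; this is the reduction I would carry out first.

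Next I would translate the Lebesgue-null hypothesis on $\gg^*\setminus E$ into a $\de O$-null statement on the base of the decomposition. The set $E$ is $G$-invariant, and for $\xi\in E$ the local closedness of $G\xi$ forces the quasi-orbit of $\xi$ to coincide with the single orbit $G\xi$, so $E$ is in fact a union of quasi-orbits. Under the chain of identifications used throughout Section~\ref{Sect3} --- the restriction map $\iota^*\colon\gg^*\to\dg^*$ sending quasi-orbits to $\Galg$-orbits (\cite[Lemma~3.4]{BB24}), the $\Galg$-equivariant Kirillov homeomorphism $\kappa\colon\dg^*/D\to\widehat{D}$ (Lemma~\ref{equiv}), and the identification of the Plancherel measure on $\widehat{D}$ with the measure induced by Lebesgue measure on $\dg^*$ --- the partition of $\gg^*$ into quasi-orbits matches the base $\widehat{D}/\Galg$ of the decomposition, and the class of $\de O$ is carried by the class of the quotient of Lebesgue measure. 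Disintegrating Lebesgue measure on $\gg^*$ over this partition and using that $\gg^*\setminus E$ is null, I would conclude that for $\de O$-almost every~$O$ the associated quasi-orbit meets~$E$, hence is a single locally closed coadjoint orbit $G\xi$ with $[G(\xi):\overline{G}(\xi)]<\infty$.

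It then remains to prove that, for such a $\xi\in E$, the factor representation $\ell(\O)$ is of type~$\I$. Here both conditions defining $E$ enter: local closedness of $G\xi$ makes $\Oc=G\xi$ a genuine embedded orbit, while the finiteness of $[G(\xi):\overline{G}(\xi)]$ makes the torus fiber of $\tau\vert_{\Bun(\Oc)}\colon\Bun(\Oc)\to\Oc$ finite, so that $\Bun(\Oc)$ is a finite union of transitive $G$-spaces; consequently $\O$ is a single $G$-orbit, the representations $T(p)$ appearing in \eqref{ell_constr_eq1} are mutually equivalent along $\O$, and $T_{\underline{\hg}}$ is a multiple of the single type-$\I$ representation $\ind(\hg_p,p)$. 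I would supply this last point by invoking the criterion of \cite{BB24} characterizing when $\ell(\O)$ is of type~$\I$ (the same circle of ideas behind \cite[Cor.~5.6]{BB24}), applied here orbit-by-orbit rather than to a single dense quasi-orbit. Combining the three steps yields that $\de O$-almost every $T(O)$, and hence $\lambda_G$ itself, is of type~$\I$.

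The hard part will be the second and third steps together: making rigorous the measure comparison that upgrades \emph{Lebesgue-null} to \emph{$\de O$-null} through the Kirillov and restriction maps, and pinning down exactly how the finite-index condition $[G(\xi):\overline{G}(\xi)]<\infty$ trivializes the torus bundle and thereby forces type~$\I$. The first of these is a disintegration argument resting on the measure-class statements already assembled in Section~\ref{Sect3}; the second is where I would lean most heavily on the structural results of \cite{Pu71} and \cite{BB24}.
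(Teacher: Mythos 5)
Your proposal follows essentially the same route as the paper: the Puk\'anszky decomposition of $\lambda_G$ over $\widehat{D}/\Galg$ refined by the further decomposition over $(\Bun(\Omega)/G)^\approx$ from Remark~\ref{ampl}, the $(\Galg+\dg^\perp)$-invariance of $E$ to pass from Lebesgue-null to $\de O$-null, the type-$\I$ criterion from \cite{BB24} (the paper cites \cite[Rem.~5.3]{BB24}) for the factor representations attached to orbits over $E$, and the conclusion via \cite[Lemma 9.3]{Pu71}. The only imprecision is that the components $T(O)$, $O\in\widehat{D}/\Galg$, are not in general factor representations --- they decompose further over $(\Bun(\Omega)/G)^\approx$ as in \eqref{regI_proof_eq2} --- but your third step already accounts for that finer layer, so this does not affect the argument.
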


\begin{proof}
	As in the proof of Lemmas \ref{factor1}--\ref{factor2},
	 we select a Borel measurable field of unitary irreducible representations $(\pi(\zeta))_{\zeta\in\widehat{D}}$ with $[\pi(\zeta)]=\zeta$ for every $\zeta\in\widehat{D}$ 
	and we obtain $\lambda_G=\Ind_D^G(\lambda_D)$ hence 
	\begin{equation}
		\label{regI_proof_eq1}
		\lambda_G
		=m\cdot \mathop{\int^\oplus}\limits_{\widehat{D}}\Ind_D^G(\pi(\zeta))\de\zeta 
		=m\cdot\mathop{\int^\oplus}\limits_{\widehat{D}/\Galg}
		\Bigl(\mathop{\int^\oplus}\limits_{O}\Ind_D^G(\pi(\zeta))\de\nu_O(\zeta)\Bigr)
		\de O
	\end{equation}
	where $\de\zeta$ is the Plancherel measure on $\widehat{D}$ corresponding to a fixed Haar measure on $D$, 
	a suitable measure $\de O$ on the countably separated Borel space $\widehat{D}/\Galg$ and a suitable $\Galg$-quasi-invariant measure $\nu_O$ on every $O\in\widehat{D}/\Galg$.
	Here	$m=\aleph_0$ if the group $D$ is noncommutative, and $m=1$ if $D$ is commutative. 
	Moreover, $\Galg$ is a solvable Lie group for which $G\subseteq \Galg$ is a closed subgroup, $[\gg,\gg]=[\ggalg,\ggalg]$, and $\ggalg$ is isomorphic to an algebraic Lie algebra. 
	See Remark~\ref{ampl}.
	
	On the other hand, by Remark~\ref{ampl} again, 
for every $[\pi_0]\in\widehat{D}$ and $O:=\Galg[\pi_0]\in\widehat{D}/\Galg$, we get the  unitary equivalence of representations of~$G$ 
	\begin{equation}
		\label{regI_proof_eq2}
		m\cdot\Ind_D^G\Bigl(\mathop{\int^\oplus}\limits_{\Galg[\pi_0]}\pi(\zeta)\de\nu(\zeta)\Bigr)
		\simeq 
		\mathop{\int^\oplus}\limits_{(\Bun(\Omega)/G)^\approx}T(\O) \de\O,
	\end{equation}
	where $\xi\in\gg^*$ has the property that $[\pi_0]$ corresponds to the coadjoint $D$-orbit of $\xi\vert_\dg\in\dg^*$ via Kirillov's correspondence for the nilpotent Lie group~$D$, 
	while $\Omega:=(\Galg+\dg^\perp)\xi\in\gg^*/(\Galg+\dg^\perp)$ and $\de\O$ is a suitable measure on the space $(\Bun(\Omega)/G)^\approx$. 
	The set $E$ in the statement is invariant to the action of the group $\Galg+\dg^\perp$ on $\gg^*$, 
	hence if $\xi\in E$ then $\Omega\subseteq E$. 
	Therefore $T(\O)$ is a type-$\I_\infty$ factor representation of $G$ for every $\O\in(\Bun(\Omega)/G)^\approx$ by 
	\cite[Rem. 5.3]{BB24}.
	Since the Lebesgue measure of $\gg^*\setminus E$ is zero, it then follows by \eqref{regI_proof_eq1}--\eqref{regI_proof_eq2} 
	that the von Neumann algebra generated by $\lambda_G$ is type~$\I$, 
	by \cite[Lemma 9.3]{Pu71}.
\end{proof}

\begin{corollary}
	\label{regI-cor}
	If $G$ is a solvable Lie group 
	that has open coadjoint orbits, then the von Neumann algebra generated by $\lambda_G$  is type~$\I$. 
	Moreover, every Casimir function on the Lie-Poisson space $\gg^*$ is constant.
\end{corollary}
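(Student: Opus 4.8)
The plan is to deduce the first assertion from Proposition~\ref{regI} and to obtain the statement on Casimir functions by a separate density argument. For Proposition~\ref{regI} to apply I must show that $\gg^*\setminus E$ has Lebesgue measure zero. Write $\Omega\subseteq\gg^*$ for the union of all open coadjoint orbits, equivalently the set of those $\xi$ for which $G\xi$ is open; by hypothesis $\Omega\neq\emptyset$. First I would record that $\Omega$ is open and co-null. Openness, together with this description of $\Omega$, follows from lower semicontinuity of the rank of the skew bilinear form $B_\xi(X,Y):=\langle\xi,[X,Y]\rangle$ on $\gg$, since $\dim G\xi=\rk B_\xi$ and $G\xi$ is open exactly when $B_\xi$ is nondegenerate, i.e. of maximal rank $\dim\gg$. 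As the entries of $B_\xi$ depend linearly on $\xi$, nondegeneracy is the non-vanishing of the Pfaffian $\mathrm{Pf}(B_\xi)$, a polynomial in $\xi$ which is not identically zero precisely because $\Omega\neq\emptyset$; hence $\gg^*\setminus\Omega=\{\mathrm{Pf}(B_\xi)=0\}$ is a proper real-algebraic subset and therefore of Lebesgue measure zero.

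Next I would verify the inclusion $\Omega\subseteq E$. For $\xi\in\Omega$ the orbit $G\xi$ is open, hence locally closed in $\gg^*$, so the second defining condition of $E$ is automatic. The remaining point, the finiteness of the index $[G(\xi):\overline{G}(\xi)]$, is the delicate one. Because $G\xi$ is open the isotropy algebra $\gg(\xi)$ is trivial and $G(\xi)$ is discrete; however $G(\xi)$ may be infinite (for the universal cover of $\CC\rtimes\CC^\times$ the stabilizer of a point of the open orbit is an infinite central subgroup), so finiteness of the index is genuine content and forces $\overline{G}(\xi)$ to contain a finite-index subgroup of $G(\xi)$. I would establish this from the description of the reduced stabilizer $\overline{G}(\xi)$ in \cite{BB24}; this is exactly the condition under which the building block $T(\O)$ over an open orbit is a type-$\I_\infty$ factor, by \cite[Rem.~5.3]{BB24}. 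I expect this index estimate to be the main obstacle. Granting $\Omega\subseteq E$, we get $\gg^*\setminus E\subseteq\gg^*\setminus\Omega$, which is null, so Proposition~\ref{regI} applies and the von Neumann algebra generated by $\lambda_G$ is type~$\I$.

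Finally, for the Casimir functions, let $c\in\Ci(\gg^*)$ be a Casimir function; as recalled in the Introduction, $c$ is then constant on every coadjoint orbit. For $\xi\in\Omega$ the orbit $G\xi$ is an open neighbourhood of $\xi$ on which $c$ is constant, so $c$ is locally constant on $\Omega$ and $\de c$ vanishes identically on $\Omega$. Since $\Omega$ is dense (its complement is null, hence has empty interior) and $\de c$ is continuous, we conclude $\de c\equiv 0$ on all of $\gg^*$; as $\gg^*$ is connected, $c$ is constant. The real work is again in the first half, the index estimate of the previous paragraph; this last argument is routine once the density of $\Omega$ is in hand.
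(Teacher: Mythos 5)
Your overall strategy matches the paper's: reduce to Proposition~\ref{regI} by showing that the union $\Omega$ of the open coadjoint orbits is co-null and contained in $E$, then handle the Casimir functions by a density argument. Your co-nullity argument via the Pfaffian of $B_\xi$ is a correct and self-contained alternative to the paper's citation of \cite[Prop.~4.5(ii)]{BB16}, and your Casimir argument ($c$ is locally constant on the dense open set $\Omega$, hence $\de c\equiv0$ on all of $\gg^*$ by continuity, hence $c$ is constant since $\gg^*$ is connected) is valid; it even avoids the paper's reliance on the finiteness of the number of open orbits.

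The inclusion $\Omega\subseteq E$, however, is not established. You correctly isolate the finiteness of the index $[G(\xi):\overline{G}(\xi)]$ as the essential point, but then only announce that you ``would establish this from the description of the reduced stabilizer'' and call it ``the main obstacle.'' That step is exactly the content of the corollary beyond Proposition~\ref{regI}, so leaving it open is a genuine gap. The paper closes it in one line: since $G\xi$ is open, $G(\xi)_\1=\{\1\}$, so by \cite[Eq.~(2.3)]{BB24} the discrete stabilizer $G(\xi)$ is abelian, and for an abelian stabilizer one has $\overline{G}(\xi)=G(\xi)$; hence the index equals $1$, not merely a finite number. Your own example $\widetilde{\Gamma_\CC}$ illustrates why this is the right mechanism: there $G(\xi)$ is an infinite discrete central subgroup, yet abelian, so the index condition holds trivially. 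Without this (or an equivalent) argument the proof of the first assertion is incomplete.
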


\begin{proof}
	Since $G$ has open coadjoint orbits, it follows that the union of its open coadjoint orbits is an open dense subset $\Omega\subseteq\gg^*$ 
	with finitely many connected components, i.e., coadjoint orbits (see e.g., \cite[Prop. 4.5(ii)]{BB16}). 
	Then the Lebesgue measure of the set $\gg^*\setminus\Omega$ is equal to zero. 
	Moreover, for every $\xi\in\Omega$, its coadjoint orbit $G\xi$ is open, hence it is a locally closed subset of $\gg^*$. 
	On the other hand, since $G\xi$ is open in $\gg^*$, 
	it follows that $G(\xi)_\1=\{\1\}$, hence 
	the coadjoint isotropy group $G(\xi)$ is abelian by \cite[Eq. (2.3)]{BB24}
	and then $\overline{G}(\xi)=G(\xi)$.
	Thus $\Omega\subseteq E$ and we may apply Proposition~\ref{regI}. 
	
	For the assertion on Casimir functions, we recall from the proof of Corollary~\ref{typeI-fact2} that if a function $c\in\Ci(\gg^*)$ is a Casimir function then for every $\xi\in \gg^*$ the restriction of $c$ to $\overline{G\xi}$ is constant. 
		Let us label the open coadjoint orbits of $G$ as $\Oc_1,\dots,\Oc_k$, 
		hence $\Oc_1\sqcup\cdots\sqcup\Oc_k=\Omega$. 
		For $j=1,\dots,k$, there exists $t_j\in\RR$ with $c(\Oc_j)=\{t_j\}$, 
		and we will consider the finite set $F:=\{t_j\mid 1\le j\le k\}\subseteq \RR$. 
		Since $\Omega$ is dense in $\gg^*$ and $c\colon\gg^*\to\RR$ is continuous, 
		we obtain $c(\gg^*)=c(\overline{\Omega})\subseteq \overline{c(\Omega)}=\overline{F}=F$, 
		hence the set of real numbers $c(\gg^*)\subseteq\RR$ is finite. 
		On the other hand, since $\gg^*$ is a connected topological space and the function $c\colon\gg^*\to\RR$ is continuous, the subset $c(\gg^*)\subseteq\RR$ is necessarily connected. 
		Consequently, the set $c(\gg^*)$ is a singleton, that is, the function $c$ is constant.
\end{proof}

Proposition~\ref{regI}  does not necessarily imply that if a solvable Lie group has open coadjoint orbits, then it is type~$\I$. 
We construct below a 
a class of examples  suggested by \cite[Sect. 7]{DiMa14} and \cite[\S 4.2]{BB23}. 
They illustrate, in addition, the well-known fact  that there exist non-type-$\I$ solvable Lie groups whose corresponding von Neumann is type~$\I$. 
(See e.g., \cite{DuMoo76}.)
Indeed, an example of such a solvable 
Lie group is discussed in \cite[Ch. IV, Rem. 9.1]{Pu71}
however these earlier examples  of groups do not have open coadjoint orbits. 

\begin{example}
	\label{exF}
	\normalfont 
	Let $\hg_{2n+1}=\RR^{2n+1}$ be the Heisenberg algebra with its canonical basis  $e_0,e_1,\dots,e_{2n}$ satisfying  $[e_j,e_{n+j}]=e_0$ for $j=1,\dots,n$. 
	We denote by $I_n\in M_n(\RR)$ the identity matrix 
	and for any $A=(a_{jk})_{1\le j,k\le n}\in M_n(\RR)$ and $c\in\RR$ we define 
	\begin{equation}
		\label{exF_eq1}
		B=\begin{pmatrix}
			c & 0 & 0 \\
			0 & A & 0 \\
			0 & 0 & cI_n-A^\top
		\end{pmatrix}
		\colon\hg_{2n+1}\to\hg_{2n+1}
	\end{equation}
	where $A^\top$ is the transpose of the matrix~$A$. 
	We have 
	$$Be_j=\sum\limits_{k=1}^na_{jk}e_k 
	\text{ and }Be_{n+j}=ce_{n+j}-\sum\limits_{r=1}^na_{rj}e_{n+r}$$
	hence 
	$
	[Be_j,e_{n+j}]+[e_j,Be_{n+j}]=a_{jj}e_0+ce_0-a_{jj}e_0=ce_0=Be_0=B[e_j,e_{n+j}].
	$
	Moreover, since $B$ is given by a block-diagonal matrix, it is clear that 
	$[Be_j,e_k]+[e_j,Be_k]=0=B[e_j,e_k]$ if either $1\le j,k\le n$ or $n+1\le j,k\le 2n$. 
	Consequently $B\in\Der(\hg_{2n+1})$ and then we can define the semidirect product 
	$$\gg:=\hg_{2n+1}\rtimes\RR B$$
	and let $G$ be its corresponding 1-connected Lie group. 
	If $c\ne0$, then  $G$ has two open coadjoint orbits by \cite[Lemma 3.7]{BB23}. 
	
	Let $S_A$ be the subgroup
	of $(\RR,+)$ generated by the imaginary parts of the purely imaginary eigenvalues of~$A$. 
	We claim that if $S_A$ is not closed in $\RR$, then the solvable Lie group $G$ is not type~$\I$. 
	In fact, let us note that $\pg:=\spa(\{e_0\}\cup\{e_{n+1},\dots,e_{2n}\})$ is an abelian ideal of 
	$\hg_{2n+1}$ which is invariant under the derivation~$B$, hence $\pg$ is actually an ideal of $\gg$. 
	We then have the short exact sequence of Lie algebras 
	$$0\to\pg\hookrightarrow\gg\to\qg\to0$$
	where $\qg:=\gg/\pg$. 
	Since $\gg$ is a solvable Lie algebra, we obtain the corresponding short exact sequence of Lie groups 
	$$\1\to P\hookrightarrow G\to Q\to\1.$$
	In more detail, $P$ is defined as the integral subgroup of $G$ corresponding to the Lie subalgebra $\pg\subseteq\gg$ and then, since $G$ is a 1-connected solvable Lie group, it follows that $P$ a 1-connected closed subgroup of $G$ by \cite[Prop. 11.2.15]{HN12}. 
	Then the quotient $Q:=G/P$ is a 1-connected solvable Lie group whose Lie algebra is $\gg/\pg=\qg$. 
	Moreover, we have an isomorphism of Lie groups $Q\simeq \RR^n\rtimes_A\RR$ 
	(see also \cite[Prop. 11.1.19]{HN12}).  
	It then follows by \cite[Th. 5.5(i)]{BB21} that the Lie group $Q$ is not type~$\I$ if $S_A$ is not closed in $\RR$. 
	Consequently, if $S_A$ is not closed in $\RR$, neither the Lie group $G$ is type~$\I$, 
	as claimed above. 
	
	As a specific example of the above construction, we take $n=4$ and 
	$$A=\begin{pmatrix}
		J & 0 \\
		0 & \theta J
	\end{pmatrix}\in M_4(\RR) \text{ with }
	J:=\begin{pmatrix}
		0 & 1 \\
		-1 & 0 
	\end{pmatrix}\in M_2(\RR)
	$$
	where $\theta\in\RR$ is irrational, while $c\in\RR\setminus\{0\}$ in \eqref{exF_eq1} is arbitrary. 
	In this case $\dim G=10$. 
	Then $S_A$ is the subgroup of $\RR$ generated by $\{1,\theta\}$, hence not closed, thus $G$ is not type~$\I$.
\end{example}

\end{document}